\documentclass[12pt]{article}
\usepackage{amscd, enumerate, amsmath, amssymb}
\usepackage{tikz-cd}

\usepackage{amsthm}
\usepackage{url}

\setlength{\oddsidemargin}{20pt}
\setlength{\topmargin}{-15pt}
\setlength{\textwidth}{420pt}
\setlength{\textheight}{620pt}

\newtheorem{thm}{Theorem}[section]

\newtheorem{lem}[thm]{Lemma}
\newtheorem{rem}[thm]{Remark}
\newtheorem{ex}[thm]{Example}
\newtheorem{cor}[thm]{Corollary}
\newtheorem{prop}[thm]{Proposition}

\newtheorem{acknowledgment}{Acknowledgment}

\numberwithin{equation}{section}

\newcommand{\M}{{\mathcal M}}
\newcommand{\cL}{{\mathcal L}}
\newcommand{\cP}{{\mathcal P}}

\newcommand{\C}{{\mathbb C}}
\newcommand{\R}{{\mathbb R}}
\newcommand{\D}{{\mathbb D}}
\newcommand{\bT}{{\mathbb T}}

\newcommand{\bH}{{\mathbb H}}

\newcommand{\bI}{{\mathbb I}}
\newcommand{\bJ}{{\mathbb J}}

\newcommand{\cH}{{\mathcal H}}
\newcommand{\cK}{{\mathcal K}}

\newcommand{\ran}{\operatorname{ran}}

\newcommand{\Real}{\operatorname{Re}}

\newcommand{\la}{\langle}
\newcommand{\ra}{\rangle}
\newcommand{\p}{\partial}
\newcommand{\lam}{\lambda}

\begin{document}
\title{L\"{o}wner equations and de Branges--Rovnyak spaces}
\author{Michio Seto\\
Department of Mathematics, \\
National Defense Academy of Japan\\
{\small 
{\it E-mail address}: {\tt mseto@nda.ac.jp}}\\
} 
\maketitle

\begin{abstract} 
We study de Branges--Rovnyak spaces parametrized by L\"{o}wner equations 
after de Branges and Ghosechowdhury.  
The purpose of this paper is to propose a new approach based on the L\"{o}wner theory 
to the structure theory of de Branges--Rovnyak spaces. 
As the main results, three solutions to the problem ``Find concrete elements in de Branges--Rovnyak spaces'' discussed by 
Sarason, Fricain--Mashreghi, Fricain--Hartmann--Ross and {\L}anucha--Nowak are given. 
In particular, we also deal with applications of the chordal  L\"{o}wner theory to the de Branges--Rovnyak spaces. 
\end{abstract}

\begin{center}
2020 Mathematical Subject Classification: Primary 30C40; Secondary 46E22\\
keywords: L\"{o}wner equation, de Branges--Rovnyak space, reproducing kernel Hilbert space
\end{center}

\section{Introduction}
In de Branges' original proof of the Bieberbach conjecture, 
the fantastic blend of the L\"{o}wner theory and the de Branges--Rovnyak theory is one of the highlights (see de Branges~\cite{de Branges2} and Vasyunin--Nikol'ski\u{\i}~\cite{VN1, VN2}). 
Although this functional analysis aspect of de Branges' proof cannot be seen in 
the widely circulated proof (for example, Conway~\cite{Conway} and Rosenblum--Rovnyak~\cite{RR}),  
Ghosechowdhury continued to search for 
interaction between 
the L\"{o}wner theory and the de Branges--Rovnyak theory in \cite{G1, G2}, 
and obtained a Fourier type expansion theorem induced 
by the L\"{o}wner equation. 
More precisely, he showed continuous resolution 
of de Branges--Rovnyak spaces induced by univalent (i.e. holomorphic and injective) self-maps of the open unit disk.  
Now, we would like to point out that 
Ghosechowdhury's theorem leads us to a new approach to the problem 
\begin{center}
``\textit{Find concrete 
elements in de Branges--Rovnyak spaces}'' 
\end{center}
discussed by Sarason~\cite{Sarason}, 
Fricain--Mashreghi~\cite{FM}, Fricain--Hartmann--Ross~\cite{FHR} and {\L}anucha--
Nowak~\cite{LN}, which we will call the inhabitant problem. 
In fact, 
we show that
\[
\log \frac{1-B_t(z)}{1-z}
\]
belongs to the de Branges--Rovnyak space induced by $B_t$, 
where $B_t(z)=B(t,a,z)$ $(0\leq a \leq t)$ is the L\"{o}wner semigroup 
associated with the Koebe functions
\[
f(t,z)=\frac{e^tz}{(1-z)^2}\quad (t\geq 0)
\]
(Corollary \ref{cor:3-7}). 
Thus, our method led by Ghosechowdhury's theorem gives an answer to Question 6.6 in \cite{FHR}. 
However, it is probably because Ghosechowdhury's paper was written with the language of the de Branges school, 
it is hard to say that his work is widely known.  
The author's early motivation for this project was to reconstruct his work 
in the style of Ando~\cite{Ando}. 
In order to rearrange his proof, we adopt direct integrals and integral operators. 
Further, in our framework, 
applications of the chordal L\"{o}wner theory (see Schlei{\ss}inger~\cite{Sch}) to de Branges--Rovnyak spaces are discussed, too. 
The author hopes that this paper  
would make Ghosechowdhury's work accessible 
to Hilbert space operator theorists. 

This paper is organized as follows. 
Sections 2 and 3 are the preliminary parts, where direct integrals of reproducing kernel Hilbert spaces and 
integral operators defined on those spaces are introduced. 
In Section 4, 
we prove Ghosechowdhury's theorem in our style, and give a solution to the inhabitant problem.  
In Section 5, toward Section 6, we introduce auxiliary spaces defined by Pick functions. 
In Section 6, 
we discuss applications of the chordal L\"{o}wner theory to the de Branges--Rovnyak theory. 
In Corollaries \ref{cor:5-5} and \ref{cor:6-4}, further solutions to the inhabitant problem are obtained. 

\section{Direct integrals}\label{sec:1}

In this section, we arrange the theory of direct integrals to fit the L\"{o}wner theory. 
\subsection{Setting}
Let $\Omega$ be a domain in the complex plane $\C$, and let $(X,\sigma)$ be a measure space.  
We consider a family $\{k(x,z,w)\}_{x\in X}$ consisting of kernel functions (positive semidefinite functions) on $\Omega\times \Omega$. 
Let $C$ be any compact subset of $\Omega$. 
Then, we will write 
\[
M_C(x)=\sup_{z\in C}k(x,z,z).
\] 
Throughout this section, we assume the following four conditions: 
\begin{enumerate}
\item[(A)] $k(x,z,w)$ is measurable with respect to $x$ when $z$ and $w$ are fixed, 
\item[(B)] $k(x,z,w)$ is holomorphic with respect to $z$ when $x$ and $w$ are fixed, 
\item[(C)] $M_C(x)$ is integrable with respect to $\sigma$ for any $C$. In particular, $k(x,z,z)$ is integrable for any fixed $z$. 
\item[(D)] $M_C(x)$ is finite for any $C$ when $x$ is fixed. 
\end{enumerate}
In what follows, 
to avoid confusion, complex variables will be denoted such as $z$ and $w$, 
and specific points in $\Omega$ will be denoted such as $\lam$ and $\mu$. 
We will write $k_{\lam}(x,z)=k(x,z,\lam)$ for any $\lam$ in $\Omega$. 

\subsection{$\cH(x)$}
Let $\cH(x)$ denote the reproducing kernel Hilbert space generated 
by the kernel function $k(x,z,w)$ for a point $x$ in $X$. 

\begin{prop}\label{prop:4-1} 
Let $x$ be a point $x$ in $X$. 
Then, $\cH(x)$ is a reproducing kernel Hilbert space consisting of holomorphic functions on $\Omega$. 
\end{prop}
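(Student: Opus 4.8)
The plan is to exhibit every element of $\cH(x)$ as a locally uniform limit of its holomorphic generators, so that Weierstrass' theorem on locally uniform limits of holomorphic functions delivers holomorphy. First I would recall that, by construction, $\cH(x)$ is the Hilbert space completion of the linear span $\cH_0(x)=\Span\{k_\lam(x,\cdot):\lam\in\Omega\}$ equipped with the inner product determined by $\la k_\lam(x,\cdot),k_\mu(x,\cdot)\ra=k(x,\mu,\lam)$, and that the reproducing identity $f(\lam)=\la f,k_\lam(x,\cdot)\ra$ holds for $f\in\cH(x)$. Condition (B) guarantees that each generator $k_\lam(x,\cdot)$ is holomorphic on $\Omega$, hence every element of the dense subspace $\cH_0(x)$ is a finite linear combination of holomorphic functions and is itself holomorphic.

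The key step is the pointwise estimate obtained from the reproducing property together with the Cauchy--Schwarz inequality: for $f\in\cH(x)$ and $z\in\Omega$,
\[
|f(z)|=|\la f,k_z(x,\cdot)\ra|\le \|f\|\,\|k_z(x,\cdot)\|=\|f\|\sqrt{k(x,z,z)}.
\]
Restricting $z$ to a compact set $C\subset\Omega$ and invoking condition (D), which ensures $M_C(x)=\sup_{z\in C}k(x,z,z)<\infty$ for the fixed $x$, I would conclude the uniform bound $\sup_{z\in C}|f(z)|\le \|f\|\sqrt{M_C(x)}$. This shows that the point-evaluation functionals are bounded uniformly over any compact subset of $\Omega$, so that convergence in the norm of $\cH(x)$ forces locally uniform convergence.

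Finally, given an arbitrary $f\in\cH(x)$, I would choose a sequence $f_n\in\cH_0(x)$ with $\|f_n-f\|\to 0$; by the previous estimate $f_n\to f$ uniformly on every compact subset of $\Omega$, and since each $f_n$ is holomorphic, the Weierstrass theorem yields that $f$ is holomorphic on $\Omega$. The only genuine obstacle is the upgrade from norm convergence to locally uniform convergence, and this is precisely where condition (D) is indispensable: it is what makes the point-evaluation bound uniform over compacta rather than merely finite at each individual point, which is exactly the hypothesis needed to pass the holomorphy of the $f_n$ to the limit.
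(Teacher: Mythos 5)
Your proof is correct and follows essentially the same route as the paper: approximate $f$ by finite linear combinations of kernels, use the reproducing property and Cauchy--Schwarz to bound point evaluations, invoke condition (D) to make this bound uniform on compacta, and conclude holomorphy of the limit via Weierstrass' theorem together with condition (B). The only difference is cosmetic—you make the bound $\sup_{z\in C}|f(z)|\le\|f\|\sqrt{M_C(x)}$ explicit, whereas the paper leaves the role of (D) implicit—so there is nothing to correct.
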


\begin{proof}
For any function $f(x,z)$ in $\cH(x)$, 
there exists a sequence $\{f_n(x,z)\}_n$ consisting of linear combinations of reproducing kernels of $\cH(x)$ 
such that 
\[
\lim_{n\to \infty}\|f_n(x,\cdot)-f(x,\cdot)\|_{\cH(x)}=0.
\]
Then, for any $\lam$ in $\Omega$, we have that 
\begin{align*}
|f_n(x,\lam)-f(x,\lam)|
&=|\la f_n(x,\cdot)-f(x,\cdot),k_{\lam}(x,\cdot) \ra_{\cH(x)}|\\
&\leq \|f_n(x,\cdot)-f(x,\cdot)\|_{\cH(x)}\|k_{\lam}(x,\cdot)\|_{\cH(x)}\\
&\to 0\quad(n\to \infty).
\end{align*}
It follows from Assumption (D) that  
this convergence is uniform on any compact subset of $\Omega$.   
Hence, by Assumption (B), $f(x,z)$ is holomorphic in $\Omega$ with respect to variable $z$. 
\end{proof}

\subsection{Riesz--Fischer type Lemma}
Let $L^{\infty}(X)$ denote the set of all bounded measurable functions on $X$. 
We need the following auxiliary space consisting of functions with two variables $x$ and $z$: 
\[
\cK=\left\{f:f(x,z)=\sum_{j=1}^nf_j(x)k_{\lam_j}(x,z),\ f_j\in L^{\infty}(X),\ \lam_j\in \Omega,\ n\geq 1\right\}.
\]
Then, $\cK$ is a vector space and invariant under multiplication by functions in $L^{\infty}(X)$. 
We shall mention the following rather trivial fact as a proposition. 
\begin{prop}
Let $f$ be any function in $\cK$. 
Then, $\|f(x,\cdot)\|_{\cH(x)}$ is a square integrable function with variable $x$. 
\end{prop}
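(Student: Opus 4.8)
The plan is to reduce the claim to the integrability of $M_C(x)$ granted by Assumption (C). First I would expand the squared norm of $f(x,\cdot)$ using bilinearity together with the reproducing property already used in Proposition \ref{prop:4-1}. Since $f(x,z)=\sum_{j=1}^n f_j(x)k_{\lam_j}(x,z)$, the relation $\la k_{\lam_j}(x,\cdot),k_{\lam_i}(x,\cdot)\ra_{\cH(x)}=k(x,\lam_i,\lam_j)$ gives
\[
\|f(x,\cdot)\|_{\cH(x)}^2=\sum_{i,j=1}^n f_j(x)\overline{f_i(x)}\,k(x,\lam_i,\lam_j).
\]
This is a \emph{finite} sum, so the strategy is to dominate it term by term, uniformly over the finitely many parameters $\lam_1,\dots,\lam_n$.

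Second, I would settle measurability. Each coefficient $f_j$ is measurable because $f_j\in L^{\infty}(X)$, and each $x\mapsto k(x,\lam_i,\lam_j)$ is measurable by Assumption (A); hence the finite sum above is a measurable function of $x$, and so is $\|f(x,\cdot)\|_{\cH(x)}$.

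Third, for the pointwise bound I would combine two ingredients. On the coefficients, $|f_j(x)|\leq\|f_j\|_{\infty}$ for $\sigma$-almost every $x$. On the kernel, the positive semidefiniteness of $k(x,\cdot,\cdot)$ yields the Cauchy--Schwarz inequality $|k(x,\lam_i,\lam_j)|\leq k(x,\lam_i,\lam_i)^{1/2}k(x,\lam_j,\lam_j)^{1/2}$. Now I would choose a single compact set $C\subset\Omega$ containing the finite collection $\{\lam_1,\dots,\lam_n\}$; then each diagonal value is at most $M_C(x)$, so $|k(x,\lam_i,\lam_j)|\leq M_C(x)$ for all $i,j$. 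Putting these together gives $\|f(x,\cdot)\|_{\cH(x)}^2\leq\bigl(\sum_{j=1}^n\|f_j\|_{\infty}\bigr)^2 M_C(x)$ for almost every $x$.

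Finally, I would invoke Assumption (C): $M_C(x)$ is integrable, so the square $\|f(x,\cdot)\|_{\cH(x)}^2$ is dominated by an integrable function and is therefore itself integrable, which is exactly the asserted square integrability of $x\mapsto\|f(x,\cdot)\|_{\cH(x)}$. The only step that is not purely mechanical is the observation that one compact $C$ can absorb all of $\lam_1,\dots,\lam_n$ simultaneously — which is legitimate precisely because there are finitely many of them — so that the single dominating function $M_C$ controls every cross term at once; the rest is the bilinear expansion and a routine domination argument, and I do not expect any genuine obstacle.
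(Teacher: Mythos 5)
Your proof is correct and follows essentially the same route as the paper's: expand $\|f(x,\cdot)\|_{\cH(x)}^2$ bilinearly via the reproducing property, get measurability from Assumption (A), control the cross terms by Cauchy--Schwarz on the kernel, and conclude integrability from Assumption (C). The only (cosmetic) difference is the final domination: you bound everything by the single integrable function $M_C(x)$ for a compact $C\supseteq\{\lam_1,\dots,\lam_n\}$, whereas the paper bounds $|k(x,\lam_i,\lam_j)|$ by $\|k_{\lam_i}(x,\cdot)\|_{\cH(x)}\|k_{\lam_j}(x,\cdot)\|_{\cH(x)}$ and uses square integrability of each factor; both rest on the same assumption and are equally valid.
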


\begin{proof}
We set $f(x,z)=\sum_{j=1}^nf_j(x)k_{\lam_j}(x,z)$. 
Then, we have that 
\begin{align*}
\|f(x,\cdot)\|_{\cH(x)}^2&=\sum_{i,j=1}^n\overline{f_i(x)}f_j(x)\la k_{\lam_j}(x,\cdot),k_{\lam_i}(x,\cdot) \ra_{\cH(x)}\\
&=\sum_{i,j=1}^n\overline{f_i(x)}f_j(x)k(x,\lam_i,\lam_j). 
\end{align*}
Hence, by Assumption (A), $\|f(x,\cdot)\|_{\cH(x)}^2$ is measurable.  
Moreover, by Assumption (C), $\|k_{\lam_j}(x,\cdot)\|_{\cH(x)}$ is square integrable. 
Hence, we have that 
\begin{align*}
\int_X \|f(x,\cdot)\|_{\cH(x)}^2\ d\sigma(x)
&\leq \int_X\sum_{i,j=1}^n\left|\overline{f_i(x)}f_j(x)k(x,\lam_i,\lam_j)\right|\ d\sigma(x)\\
&\leq \sum_{i,j=1}^n\|f_i\|_{\infty}\|f_j\|_{\infty}\int_X\|k_{\lam_i}\|_{\cH(x)}\|k_{\lam_j}\|_{\cH(x)}\ d\sigma(x)\\
&<\infty.
\end{align*}
This concludes the proof.
\end{proof}

We need the following Riesz--Fischer type lemma. 

\begin{lem}\label{lem:4-1}
Let $\{f_n\}_n$ be a sequence in $\cK$. 
If 
\[
\int_X\|f_n(x,\cdot)-f_m(x,\cdot)\|_{\cH(x)}^2\ d\sigma(x)\to 0\quad(n,m\to \infty),
\] 
then, 
for almost all $x$,   
there exists a function $f(x, z)$ in $\cH(x)$ such that 
\[
\int_X\|f(x,\cdot)\|_{\cH(x)}^2\ d\sigma(x)<\infty
\]
and
\[
\int_X\|f_n(x,\cdot)-f(x,\cdot)\|_{\cH(x)}^2\ d\sigma(x)\to 0\quad(n\to \infty).
\] 
\end{lem}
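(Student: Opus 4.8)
The plan is to replicate the classical Riesz--Fischer argument in the fibered setting, using the completeness of each individual space $\cH(x)$ together with a rapidly Cauchy subsequence to build the limit pointwise in $x$, and then to recover $L^2$-convergence of the whole sequence via Fatou's lemma. The one genuinely delicate point is measurability: the limit $f(x,\cdot)$ will be produced only as an abstract limit in $\cH(x)$ for each $x$ individually, so before integrating any norm of $f$ I must verify that the relevant norm functions are measurable in $x$.

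First I would pass to a subsequence $\{f_{n_k}\}_k$ satisfying
\[
\int_X\|f_{n_{k+1}}(x,\cdot)-f_{n_k}(x,\cdot)\|_{\cH(x)}^2\ d\sigma(x)\leq 4^{-k},
\]
which is possible by the Cauchy hypothesis. Setting $g(x)=\sum_{k\geq 1}\|f_{n_{k+1}}(x,\cdot)-f_{n_k}(x,\cdot)\|_{\cH(x)}$, Minkowski's inequality in $L^2(X,\sigma)$ bounds $\|g\|_{L^2(X)}$ by $\sum_k 2^{-k}<\infty$, so $g\in L^2(X,\sigma)$; in particular $g(x)<\infty$ for almost every $x$. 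Each summand is measurable because, for $f\in\cK$, the computation in the preceding proposition shows $\|f(x,\cdot)\|_{\cH(x)}^2$ is measurable by Assumption (A).

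For each $x$ with $g(x)<\infty$, the sequence $\{f_{n_k}(x,\cdot)\}_k$ is Cauchy in the complete Hilbert space $\cH(x)$, hence converges to a limit $f(x,\cdot)\in\cH(x)$; I set $f(x,\cdot)=0$ on the remaining null set. This resolves the measurability obstacle as follows: fixing $n$, the measurable functions $x\mapsto\|f_n(x,\cdot)-f_{n_k}(x,\cdot)\|_{\cH(x)}$ converge pointwise almost everywhere to $x\mapsto\|f_n(x,\cdot)-f(x,\cdot)\|_{\cH(x)}$ as $k\to\infty$, so the latter is measurable. Note that $f(x,\cdot)$ is automatically a holomorphic function on $\Omega$ by Proposition~\ref{prop:4-1}, so it is a function of $z$ in the intended sense.

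Finally, Fatou's lemma gives
\[
\int_X\|f_n(x,\cdot)-f(x,\cdot)\|_{\cH(x)}^2\ d\sigma(x)\leq\liminf_{k\to\infty}\int_X\|f_n(x,\cdot)-f_{n_k}(x,\cdot)\|_{\cH(x)}^2\ d\sigma(x).
\]
Given $\varepsilon>0$, the Cauchy hypothesis forces the right-hand side to be at most $\varepsilon$ once $n$ is large, proving $\int_X\|f_n(x,\cdot)-f(x,\cdot)\|_{\cH(x)}^2\ d\sigma(x)\to 0$. Square integrability of $f$ then follows by writing $f=f_N+(f-f_N)$ for a large $N$ and applying Minkowski, since $f_N\in\cK$ has square-integrable fibre norm by the preceding proposition while $f-f_N$ does by the convergence just established.
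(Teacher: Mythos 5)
Your proof is correct and follows essentially the same Riesz--Fischer argument as the paper: a rapidly Cauchy subsequence, fiberwise completeness of $\cH(x)$ to produce the almost-everywhere limit, and Fatou's lemma for the $L^2$ convergence of the full sequence. The only differences are cosmetic---you obtain square integrability of $f$ by the triangle inequality against a fixed $f_N$ rather than by dominated convergence with the dominating function $g$ (which is why the paper's $g$ includes the extra term $\|f_{n_1}(x,\cdot)\|_{\cH(x)}$), and you make explicit the measurability of $x\mapsto\|f_n(x,\cdot)-f(x,\cdot)\|_{\cH(x)}$ as a pointwise limit of measurable functions, a point the paper leaves implicit.
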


\begin{proof}
Since this lemma is essentially the same as the Riesz--Fischer theorem, 
we give a sketch of the proof. 
It follows from the assumption that 
there exists a subsequence $\{f_{n_k}\}_k$ of $\{f_n\}_n$ such that 
\[
\int_X \|f_{n_{k+1}}(x,\cdot)-f_{n_k}(x,\cdot)\|_{\cH(x)}^2\ d\sigma(x)<\frac{1}{2^{2k}}. 
\]
Then, since 
\[
g(x)=\|f_{n_1}(x,\cdot)\|_{\cH(x)}+\sum_{k=1}^{\infty}\|f_{n_{k+1}}(x,\cdot)-f_{n_k}(x,\cdot)\|_{\cH(x)} 
\]
is square integrable by the monotone convergence theorem. 
In particular, $g(x)$ is finite for almost all $x$ in $X$. 
Hence,
we have that
\[
\|f_{n_{\ell}}(x,\cdot)-f_{n_k}(x,\cdot)\|_{\cH(x)}
\leq \sum_{j=k}^{\ell-1}\|f_{n_{j+1}}(x,\cdot)-f_{n_j}(x,\cdot)\|_{\cH(x)}\to 0\quad(k,\ell\to\infty)
\]
for almost all $x$ in $X$. 
Therefore, 
for almost all $x$,   
there exists a function $f(x, z)$ in $\cH(x)$ 
such that 
\[
\lim_{k\to \infty}\|f_{n_k}(x,\cdot)-f(x,\cdot)\|_{\cH(x)}=0. 
\]
Then, 
since $\|f_{n_k}(x,\cdot)\|_{\cH(x)}\leq g(x)$, 
by Lebesgue's dominated convergence theorem, we have 
\[
\int_X\|f(x,\cdot)\|_{\cH(x)}^2\ d\sigma(x)<\infty.
\]
Further, 
by Fatou's lemma, we have 
\[
\int_X\|f_n(x,\cdot)-f(x,\cdot)\|_{\cH(x)}^2\ d\sigma(x)\to 0\quad(n\to \infty).
\] 
\end{proof}

\subsection{$L^2(\cH(x))$}
Let $L^2(\cH(x))$ denote the set of all functions obtained by the manner of Lemma \ref{lem:4-1}. 
Setting 
\[
\|f\|_{L^2(\cH(x))}=\left(
\int_X \|f(x,\cdot)\|_{\cH(x)}^2\ d\sigma(x)
\right)^{1/2}, 
\]
we may write  
\[
L^2(\cH(x))=\left\{f:\exists f_n\in \cK\ \mbox{s.t.}\ 
\lim_{n\to \infty}\|f_n-f\|_{L^2(\cH(x))}=0\right\}. 
\]
In what follows, two functions 
$f$ and $g$ in $L^2(\cH(x))$ will be identified if 
$\|f-g\|_{L^2(\cH(x))}=0$.
We summarize the basic properties on $L^2(\cH(x))$ as follows: 

\begin{thm}\label{thm:2-4}
$L^2(\cH(x))$ is an $L^{\infty}$-Hilbert module consisting of functions with two variables $x$ and $z$. 
Moreover, let $f$ be any function in $L^2(\cH(x))$. Then, 
$f(x,z)$ is a measurable function with variable $x$ when $z$ is fixed and 
$f(x,z)$ is a holomorphic function on $\Omega$ with variable $z$ for almost all $x$. 
\end{thm}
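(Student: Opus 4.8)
The plan is to establish the three assertions in turn, using Lemma \ref{lem:4-1} for completeness, the reproducing property for measurability, and Proposition \ref{prop:4-1} for holomorphy, so that the only substantive new work lies in passing the module action and the pointwise measurability through the closure of $\cK$. First I would fix the Hilbert space structure. The space $L^2(\cH(x))$ is by definition the set of $\|\cdot\|_{L^2(\cH(x))}$-limits of $\cK$-sequences, so completeness follows by a diagonal argument: given a Cauchy sequence $\{g_n\}$ in $L^2(\cH(x))$, choose $h_n\in\cK$ with $\|g_n-h_n\|_{L^2(\cH(x))}<1/n$; then $\{h_n\}$ is Cauchy in $\cK$, Lemma \ref{lem:4-1} furnishes a limit $f\in L^2(\cH(x))$, and $\|g_n-f\|_{L^2(\cH(x))}\le\|g_n-h_n\|_{L^2(\cH(x))}+\|h_n-f\|_{L^2(\cH(x))}\to0$. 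For the inner product I would set
\[
\langle f,g\rangle_{L^2(\cH(x))}=\int_X\langle f(x,\cdot),g(x,\cdot)\rangle_{\cH(x)}\,d\sigma(x),
\]
whose integrand is measurable as an almost-everywhere limit of the fiber inner products of approximating $\cK$-elements (cf. Assumption (A)) and is integrable by the pointwise Cauchy--Schwarz inequality in $\cH(x)$ followed by Cauchy--Schwarz in $L^2(X)$; since $\langle f,f\rangle_{L^2(\cH(x))}=\|f\|_{L^2(\cH(x))}^2$, this makes $L^2(\cH(x))$ a Hilbert space.

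Next, for the $L^{\infty}$-module structure I would define $(\varphi f)(x,z)=\varphi(x)f(x,z)$ for $\varphi\in L^{\infty}(X)$. Since $\cK$ is invariant under multiplication by $L^{\infty}(X)$-functions and $\|(\varphi g)(x,\cdot)\|_{\cH(x)}=|\varphi(x)|\,\|g(x,\cdot)\|_{\cH(x)}$ yields $\|\varphi g\|_{L^2(\cH(x))}\le\|\varphi\|_{\infty}\|g\|_{L^2(\cH(x))}$ for $g\in\cK$, approximating an arbitrary $f$ by $\cK$-elements shows $\varphi f\in L^2(\cH(x))$ with the same bound. The module axioms then hold pointwise in $x$, exhibiting $L^2(\cH(x))$ as an $L^{\infty}$-Hilbert module.

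For the measurability claim, fix $\lambda\in\Omega$. For $f\in\cK$ the value $f(x,\lambda)=\sum_j f_j(x)k_{\lambda_j}(x,\lambda)$ is measurable in $x$ by Assumption (A) and the measurability of the $f_j$. For general $f\in L^2(\cH(x))$ I would take $f_n\in\cK$ with $f_n\to f$, pass to the subsequence produced by Lemma \ref{lem:4-1} so that $f_{n_k}(x,\cdot)\to f(x,\cdot)$ in $\cH(x)$ for all $x$ outside a single null set $N$, and invoke the reproducing property together with
\[
|f_{n_k}(x,\lambda)-f(x,\lambda)|=|\langle f_{n_k}(x,\cdot)-f(x,\cdot),k_{\lambda}(x,\cdot)\rangle_{\cH(x)}|\le\|f_{n_k}(x,\cdot)-f(x,\cdot)\|_{\cH(x)}\,\|k_{\lambda}(x,\cdot)\|_{\cH(x)}
\]
to conclude that $f_{n_k}(x,\lambda)\to f(x,\lambda)$ for every $x\notin N$. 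Thus $f(\cdot,\lambda)$, being an almost-everywhere limit of measurable functions, is measurable. The holomorphy claim is then immediate: Lemma \ref{lem:4-1} places $f(x,\cdot)$ in $\cH(x)$ for almost all $x$, and by Proposition \ref{prop:4-1} every element of $\cH(x)$ is holomorphic on $\Omega$, so $f(x,z)$ is holomorphic in $z$ for almost all $x$.

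I expect the measurability step to require the most care, since it is the one place where almost-everywhere convergence in the fiber norms $\|\cdot\|_{\cH(x)}$ must be converted into pointwise-in-$z$ convergence of values; the reproducing-kernel estimate above is what bridges the two. The key simplification is that the subsequence furnished by Lemma \ref{lem:4-1} converges in $\cH(x)$ on a single null set $N$ independent of $\lambda$, so that evaluation at every $\lambda\in\Omega$ converges simultaneously for $x\notin N$, keeping the argument uniform in $z$ and avoiding any accumulation of exceptional sets.
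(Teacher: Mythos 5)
Your proposal is correct and follows essentially the same route as the paper: the measurability and holomorphy claims are handled exactly as in the paper's proof, by approximating with $\cK$-elements, passing to a subsequence with almost-everywhere fiber convergence via Lemma \ref{lem:4-1}, and using the reproducing-kernel estimate to convert norm convergence into pointwise convergence of values. The only cosmetic differences are that you spell out the Hilbert-module half (which the paper dispatches by saying it is the same argument as Lemma \ref{lem:4-1}) and that you obtain holomorphy by citing Proposition \ref{prop:4-1} rather than re-running the uniform-convergence argument from Assumptions (B) and (D); both are sound.
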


\begin{proof}
The proof of the first half is the same as that of Lemma \ref{lem:4-1}. 
We give the proof of the second half. 
Let $\{f_n\}_n$ be a sequence in $\cK$ such that 
\[
\lim_{n\to \infty}\|f_n-f\|_{L^2(\cH(x))}=0. 
\]   
Moreover, without loss of generality, we may assume that 
\[
\lim_{n\to \infty}\|f_n(x,\cdot)-f(x,\cdot)\|_{\cH(x)}=0
\]
for almost all $x$ in $X$.
Then, for any $\lam$ in $\Omega$, we have that
\[
|f_n(x,\lam)-f(x,\lam)|
\leq \|f_n(x,\cdot)-f(x,\cdot)\|_{\cH(x)}\|k_{\lam}(x,\cdot)\|_{\cH(x)}
\to 0\quad(n\to \infty).
\]
By Assumptions (A), (B) and (D), we have the conclusion. 
\end{proof}

\begin{ex}\label{ex:2-5}\rm 
Suppose that $h(x,z)$ is a nonvanishing function on $X\times \Omega$. 
We shall consider kernel function $k(x,z,w)=\overline{h(x,w)}h(x,z)$. Then, 
it is easy to see that 
\[
\cK=\{f:f(x,z)=g(x)h(x,z),\ g\in L^{\infty}(X)\}
\] 
and 
\[
\|f\|_{L^2(\cH(x))}^2=\int_X|g(x)|^2\|h(x,\cdot)\|_{\cH(x)}^2\ d\sigma(x)
\]
for any $f(x,z)=g(x)h(x,z)$ in $\cK$. 
Moreover, since
\[
|h(x,w)|^2=k(x,w,w)=\left\|\overline{h(x,w)}h(x,\cdot)\right\|_{\cH(x)}^2=|h(x,w)|^2\|h(x,\cdot)\|_{\cH(x)}^2, 
\]
we have that $\|h(x,\cdot)\|_{\cH(x)}^2=1$. 
Hence, if $\{f_n\}_n$ is a Cauchy sequence in $L^2(\cH(x))$, then 
so is $\{g_n\}_n$ in $L^2(X)$, where we set $f_n(x,z)=g_n(x)h(x,z)$.  
This observation concludes that 
\[
L^2(\cH(x))=\{f:f(x,z)=g(x)h(x,z),\ g \in L^2(X)\}.
\]
\end{ex}

\section{Integral operators}

Let $\{B(x,z)\}_{x\in X}$ be a family consisting of holomorphic functions on $\Omega$ satisfying
\begin{enumerate}
\item[(E)] $\|B\|_{\infty,C}=\sup_{x\in X,z \in C}|B(x,z)|$ is finite for any compact subset $C$ of $\Omega$. 
\end{enumerate}
In this section, we consider the integral operator defined as follows:
\[
\bI_B: f\mapsto \int_XB(x,z)f(x,z)\ d\sigma(x)\quad (f\in L^2(\cH(x))).
\]

\subsection{Basic properties}

First, we note that $(\bI_B f)(\lam)$ is finite for any $\lam$ in $\Omega$. 
Indeed, by Assumption (E)  we have that
\begin{align*}
\int_X|B(x,\lam)f(x,\lam)|\ d\sigma(x)
&\leq \|B\|_{\infty,\{\lam\}}\int_X|f(x,\lam)|\ d\sigma(x)\\
&\leq \|B\|_{\infty,\{\lam\}}\int_X\|f(x,\cdot)\|_{\cH(x)}\|k_{\lam}(x,\cdot)\|_{\cH(x)}\ d\sigma(x)\\
&\leq \|B\|_{\infty,\{\lam\}}\|f\|_{L^2(\cH(x))}\|k_{\lam}\|_{L^2(\cH(x))}.
\end{align*}
Moreover, we have that
\begin{equation}\label{eq:2-1}
|(\bI_B f)(\lam)-(\bI_B g)(\lam)|\leq \|B\|_{\infty,\{\lam\}}\|f-g\|_{L^2(\cH(x))}\|k_{\lam}\|_{L^2(\cH(x))}
\end{equation}
for any $f$ and $g$ in $L^2(\cH(x))$. 

\begin{lem}\label{lem:1-2-0}
For any $\lam$ in $\Omega$, $\bI_B k_{\lam}$ is a holomorphic function on $\Omega$. 
\end{lem}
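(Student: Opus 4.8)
The plan is to write $\bI_B k_\lam$ as the explicit integral
\[
(\bI_B k_\lam)(z)=\int_X B(x,z)\,k(x,z,\lam)\ d\sigma(x),
\]
which is legitimate because $k_\lam$ lies in $\cK\subset L^2(\cH(x))$ (take the constant function $1\in L^{\infty}(X)$ in the definition of $\cK$; its square norm is $\int_X k(x,\lam,\lam)\,d\sigma(x)=\int_X M_{\{\lam\}}(x)\,d\sigma(x)<\infty$ by Assumption (C)), and then to establish holomorphy through Morera's theorem: I will verify that this integral is continuous in $z$ and that its contour integral vanishes over every triangle in $\Omega$.

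The technical backbone is a single domination estimate. Fix a compact set $C\subset\Omega$. For $z\in C$ the reproducing property gives $k(x,z,\lam)=\la k_\lam(x,\cdot),k_z(x,\cdot)\ra_{\cH(x)}$, so the Cauchy--Schwarz inequality yields
\[
|k(x,z,\lam)|\leq k(x,z,z)^{1/2}k(x,\lam,\lam)^{1/2}\leq M_C(x)^{1/2}M_{\{\lam\}}(x)^{1/2}.
\]
Combining this with Assumption (E) bounds the integrand by $|B(x,z)\,k(x,z,\lam)|\leq \|B\|_{\infty,C}\,M_C(x)^{1/2}M_{\{\lam\}}(x)^{1/2}$, and by the Cauchy--Schwarz inequality in $L^2(X)$ the right-hand side is $\sigma$-integrable, since $\int_X M_C(x)^{1/2}M_{\{\lam\}}(x)^{1/2}\,d\sigma(x)\leq (\int_X M_C\,d\sigma)^{1/2}(\int_X M_{\{\lam\}}\,d\sigma)^{1/2}<\infty$ by Assumption (C). This bound, uniform in $z\in C$, is exactly what the remaining two steps require.

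For continuity, I would take $z_n\to z$ inside a compact neighborhood $C$, observe that for each fixed $x$ the integrand $B(x,z)k(x,z,\lam)$ is continuous in $z$ (as $B(x,\cdot)$ is holomorphic by hypothesis and $k(x,\cdot,\lam)$ is holomorphic by Assumption (B)), and invoke Lebesgue's dominated convergence theorem with the dominating function just produced to conclude $(\bI_B k_\lam)(z_n)\to(\bI_B k_\lam)(z)$. For the contour condition, fix any triangle $\gamma$ whose closed hull $C$ lies in $\Omega$. The same bound makes $(x,z)\mapsto B(x,z)k(x,z,\lam)$ absolutely integrable on $X\times\gamma$, so Fubini's theorem permits interchanging the two integrations; for each fixed $x$ the inner integrand is holomorphic on $\Omega$, whence $\oint_\gamma B(x,z)k(x,z,\lam)\,dz=0$ by Cauchy's theorem, and therefore $\oint_\gamma(\bI_B k_\lam)(z)\,dz=0$. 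Morera's theorem then delivers holomorphy on $\Omega$.

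The main obstacle is not conceptual but lies in setting up this domination correctly: one must extract the $x$-uniform bound on $|B|$ from Assumption (E) and the $z$-uniform, $\sigma$-integrable bound on $|k(x,z,\lam)|$ from the reproducing-kernel Cauchy--Schwarz inequality together with Assumption (C), and recognize that the single estimate simultaneously justifies dominated convergence (for continuity) and Fubini (for the contour integral). Once it is in place, both halves of the Morera hypotheses follow without further difficulty.
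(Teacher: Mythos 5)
Your proof is correct, but it reaches holomorphy by a different mechanism than the paper. Both arguments rest on the same core domination: the reproducing--kernel Cauchy--Schwarz bound $|k(x,z,\lam)|\leq M_C(x)^{1/2}M_{\{\lam\}}(x)^{1/2}$ for $z$ in a compact set $C$, combined with Assumption (E) for $|B|$ and Assumption (C) plus Cauchy--Schwarz in $L^2(\sigma)$ for integrability. From there, however, the paper expands $B(x,z)k_\lam(x,z)$ in a power series about each point $a\in\Omega$, estimates the coefficients $c_j(x,\lam)$ by the Cauchy estimates on a circle $|z-a|=R$, and uses Fubini to interchange the series and the integral, thereby exhibiting $\bI_B k_\lam$ directly as a convergent power series near $a$; you instead run Morera's theorem, using dominated convergence for continuity and Fubini over $X\times\gamma$ (plus Goursat's theorem pointwise in $x$) to kill the triangle integrals. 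Your route is slightly more economical in that it avoids all coefficient bookkeeping and treats the two analytic verifications with one uniform estimate, while the paper's route has the mild advantage of producing the local Taylor expansion of $\bI_B k_\lam$ explicitly (with coefficients $\int_X c_j(x,\lam)\,d\sigma(x)$). One small point common to both arguments: the Fubini step tacitly requires joint measurability of $(x,z)\mapsto B(x,z)k(x,z,\lam)$, which holds because the function is measurable in $x$ and continuous in $z$ (a Carath\'eodory function on a separable parameter set); neither your write-up nor the paper makes this explicit, so it is not a gap relative to the paper's standard of rigor.
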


\begin{proof}
Let $a$ be a point in $\Omega$ and 
let $c_j(x,\lam)$ denote the $j$-th coefficient of the power series expansion centered at $a$ of $B(x,z)k_{\lam}(x,z)$. 
Then, 
choosing appropriate $R>0$, 
we have that 
\begin{align*}
|c_j(x,\lam)|
&\leq \frac{1}{2\pi}\int_0^{2\pi}\frac{|B(x,a+Re^{i\theta})k_{\lam}(x,a+Re^{i\theta})|}{R^j}\ d\theta\\
&\leq \frac{\|B\|_{\infty,C}}{2\pi}\int_0^{2\pi}\frac{\|k_{a+Re^{i\theta}}(x,\cdot)\|_{\cH(x)}\|k_{\lam}(x,\cdot)\|_{\cH(x)}}{R^j}\ d\theta\\
&\leq \frac{\|B\|_{\infty,C}\sqrt{M_C(x)}\|k_{\lam}(x,\cdot)\|_{\cH(x)}}{R^j},
\end{align*}
where $C$ is chosen as $C=\{a+Re^{i\theta}:0\leq \theta <2\pi\}$.
Hence, if $|z-a|<r<R$, then
\begin{align*}
\int_X \left(\sum_{j=0}^{\infty}|c_j(x,\lam)(z-a)^j|\right)\ d\sigma(x)
&\leq \int_X \left(\sum_{j=0}^{\infty}\frac{\|B\|_{\infty,C}\sqrt{M_C(x)}\|k_{\lam}(x,\cdot)\|_{\cH(x)}}{R^j}r^j\right)\ d\sigma(x)\\
&=\frac{R}{R-r}\|B\|_{\infty,C}\left(\int_X M_C(x)\ d\sigma(x)\right)^{1/2}\|k_{\lam}\|_{L^2(\cH(x))}\\
&<\infty
\end{align*}
by Assumption (C). 
Hence, by Fubini's theorem, 
\begin{align*}
(\bI_B k_{\lam})(z)
&=\int_X B(x,z)k_{\lam}(x,z)\ d\sigma(x)\\
&=\int_X \left(\sum_{j=0}^{\infty}c_j(x,\lam)(z-a)^j\right)\ d\sigma(x)\\
&=\sum_{j=0}^{\infty}\left(\int_X c_j(x,\lam)\ d\sigma(x) \right)(z-a)^j
\end{align*}
is holomorphic at $a$. 
\end{proof}

\begin{lem}\label{lem:2-2}
The range space of $\bI_B$ is a subspace of the vector space consisting of holomorphic functions on $\Omega$ 
and $\ker \bI_B$ is closed. 
\end{lem}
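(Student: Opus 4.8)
The plan is to prove the two assertions separately, using the pointwise estimate \eqref{eq:2-1} as the basic tool and Lemma~\ref{lem:1-2-0} for holomorphy.

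I would dispatch the closedness of $\ker \bI_B$ first, since it is immediate from \eqref{eq:2-1}. Suppose $\{f_n\}_n \subset \ker \bI_B$ converges to $f$ in $L^2(\cH(x))$. Fixing $\lam$ in $\Omega$ and applying \eqref{eq:2-1} with $g=f_n$ gives
\[
|(\bI_B f)(\lam)| = |(\bI_B f)(\lam) - (\bI_B f_n)(\lam)| \leq \|B\|_{\infty,\{\lam\}}\,\|f - f_n\|_{L^2(\cH(x))}\,\|k_\lam\|_{L^2(\cH(x))} \to 0,
\]
so $(\bI_B f)(\lam) = 0$ for every $\lam$, i.e.\ $f \in \ker \bI_B$. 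Hence $\ker \bI_B$ is closed.

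For the range, I would show that $\bI_B f$ is holomorphic on $\Omega$ for every $f$ in $L^2(\cH(x))$; linearity of $\bI_B$ then makes the range a subspace of the holomorphic functions. First, for $f$ in $\cK$, say $f(x,z) = \sum_{j=1}^n f_j(x) k_{\lam_j}(x,z)$, I would write $\bI_B f = \sum_{j=1}^n \bI_{Bf_j} k_{\lam_j}$, where $\bI_{Bf_j}$ denotes the integral operator attached to the family $\{B(x,z)f_j(x)\}_{x\in X}$. This family again satisfies Assumption (E), because $\|Bf_j\|_{\infty,C} \leq \|B\|_{\infty,C}\,\|f_j\|_\infty < \infty$, so Lemma~\ref{lem:1-2-0} applies to it and each summand is holomorphic. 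Thus $\bI_B f$ is holomorphic for $f$ in $\cK$.

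For general $f$, choose $g_n$ in $\cK$ with $g_n \to f$ in $L^2(\cH(x))$. The main step, and the only real obstacle, is to upgrade the pointwise convergence furnished by \eqref{eq:2-1} to locally uniform convergence. On a compact set $C \subset \Omega$ one has $\|k_\lam\|_{L^2(\cH(x))}^2 = \int_X k(x,\lam,\lam)\, d\sigma(x) \leq \int_X M_C(x)\, d\sigma(x)$, which is finite by Assumption (C) and, crucially, independent of $\lam$ in $C$. Together with $\|B\|_{\infty,C} < \infty$ from Assumption (E), \eqref{eq:2-1} then yields
\[
\sup_{\lam \in C}|(\bI_B f)(\lam) - (\bI_B g_n)(\lam)| \leq \|B\|_{\infty,C}\,\|f - g_n\|_{L^2(\cH(x))}\left(\int_X M_C(x)\, d\sigma(x)\right)^{1/2} \to 0.
\]
Hence $\bI_B g_n \to \bI_B f$ uniformly on every compact subset of $\Omega$, and since each $\bI_B g_n$ is holomorphic, the Weierstrass convergence theorem shows that $\bI_B f$ is holomorphic. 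This completes the plan; the entire difficulty is concentrated in the $\lam$-uniform kernel bound over compacta, which Assumption (C) delivers through $M_C$.
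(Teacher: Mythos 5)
Your proof is correct and follows essentially the same route as the paper: closedness of $\ker \bI_B$ via the pointwise estimate \eqref{eq:2-1}, and holomorphy of the range by approximating from $\cK$, upgrading pointwise to locally uniform convergence on compacta, and invoking Weierstrass. You are in fact slightly more careful than the paper at two points: you justify holomorphy of $\bI_B f$ for $f\in\cK$ by applying Lemma~\ref{lem:1-2-0} to the modified family $B(x,z)f_j(x)$ (the paper cites Lemma~\ref{lem:1-2-0} directly, which literally covers only $\bI_B k_{\lam}$), and you correctly trace the $\lam$-uniform bound $\sup_{\lam\in C}\|k_{\lam}\|_{L^2(\cH(x))}\leq\left(\int_X M_C(x)\,d\sigma(x)\right)^{1/2}$ to Assumption (C), whereas the paper attributes the uniform convergence to Assumptions (D) and (E).
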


\begin{proof}
Let $f$ be any function in $L^2(\cH(x))$. Then, by definition, there exists a sequence $\{f_n\}_n$ in $\cK$ such that 
\[
\lim_{n\to \infty}\|f_n-f\|_{L^2(\cH(x))}=0.
\]
Then, by (\ref{eq:2-1}), $(\bI_B f_n)(\lam)$ converges to $(\bI_B f)(\lam)$ for every $\lam$ in $\Omega$. 
Moreover, by Assumptions (D) and (E), this convergence is uniform on any compact subset of $\Omega$. 
Hence,  by Lemma \ref{lem:1-2-0}, $(\bI_B f)(z)$ is a holomorphic function on $\Omega$. 
Next, 
let $f$ be a function in the closure of $\ker \bI_B$ and let $\{f_n\}_n$ be a sequence in $\ker \bI_B$ such that 
\[
\lim_{n\to \infty}\|f_n-f\|_{L^2(\cH(x))}=0.
\]
Then, 
by (\ref{eq:2-1}), we have that
\begin{align*}
|(\bI_B f)(\lam)|
&=|(\bI_B f)(\lam)-(\bI_B f_n)(\lam)|\\
&\leq \|B\|_{\infty,\{\lam\}} \|f-f_n\|_{L^2(\cH(x))}\|k_{\lam}\|_{L^2(\cH(x))}\\
&\to 0\quad(n\to \infty)
\end{align*}
for any $\lam$ in $\Omega$. 
Hence, $\ker \bI$ is closed. 
\end{proof}

By Lemma \ref{lem:2-2} and the pull-back construction, 
we can endow the range space of $\bI_B$ with a Hilbert space structure as follows: 
\begin{align*}
\M(\bI_B; L^2(\cH(x)))
&=\M(\bI_B)=(\ran \bI_B,\ \|\cdot\|_{\M(\bI_B)}),\\
\|\bI_B f\|_{\M(\bI_B)}
&=\|P_{(\ker \bI_B)^{\perp}}f\|_{L^2(\cH(x))},
\end{align*}
where $(\ker \bI_B)^{\perp}$ denotes the orthogonal complement of $\ker \bI_B$ in $L^2(\cH(x))$. 
\begin{thm}\label{thm:2-2}
$\M(\bI_B)$ is a reproducing kernel Hilbert space and 
its reproducing kernel is 
\[
\left(\bI_B \overline{B(\cdot,\lam)}k_{\lam}\right)(z)=\int_X\overline{B(x,\lam)}B(x,z)k_{\lam}(x,z)\ d\sigma(x).
\]
In particular, for any function $F$ in $\M(\bI_B)$, there exists some function $f$ in $L^2(\cH(x))$ such that 
\[
F(z)=\int_XB(x,z)f(x,z)\ d\sigma(x).
\]
\end{thm}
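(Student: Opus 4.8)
The plan is to treat $\M(\bI_B)$ as the range of $\bI_B$ equipped with the pull-back norm, which makes the restriction $\bI_B\colon (\ker \bI_B)^{\perp}\to \M(\bI_B)$ a surjective isometry. Since $\ker \bI_B$ is closed by Lemma \ref{lem:2-2}, the space $(\ker \bI_B)^{\perp}$ is a closed subspace of the Hilbert space $L^2(\cH(x))$, hence complete; transporting this completeness through the isometry shows that $\M(\bI_B)$ is a Hilbert space, and its elements are holomorphic on $\Omega$ again by Lemma \ref{lem:2-2}. It therefore remains to verify that point evaluations are bounded and to identify the reproducing kernel, and for this everything will be driven by a single reproducing-type identity.

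First I would introduce, for each $\lam$ in $\Omega$, the function $g_{\lam}(x,z)=\overline{B(x,\lam)}\,k_{\lam}(x,z)$. By Assumption (E) the coefficient $\overline{B(\cdot,\lam)}$ lies in $L^{\infty}(X)$, so $g_{\lam}$ belongs to $\cK$ and in particular to $L^2(\cH(x))$. The key step is the identity
\[
(\bI_B f)(\lam)=\la f, g_{\lam}\ra_{L^2(\cH(x))}\qquad (f\in L^2(\cH(x))).
\]
For $f$ in $\cK$ this is a direct computation: using the reproducing property $f(x,\lam)=\la f(x,\cdot),k_{\lam}(x,\cdot)\ra_{\cH(x)}$ in each fiber and the conjugate-linearity of the inner product in its second slot, one gets $B(x,\lam)f(x,\lam)=\la f(x,\cdot),g_{\lam}(x,\cdot)\ra_{\cH(x)}$, and integrating over $X$ yields the identity. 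To pass to a general $f$ in $L^2(\cH(x))$ I would approximate it by a sequence in $\cK$ and let $n\to\infty$: the left-hand side converges by the continuity estimate (\ref{eq:2-1}), and the right-hand side converges because the inner product against the fixed element $g_{\lam}$ is continuous on $L^2(\cH(x))$. Justifying this limiting argument (equivalently, the interchange of integration and the fiberwise inner product) is the one place that needs care and is the main obstacle.

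Once the identity is in hand the rest is formal. Taking $f$ in $\ker \bI_B$ gives $\la f, g_{\lam}\ra_{L^2(\cH(x))}=(\bI_B f)(\lam)=0$, so $g_{\lam}\in(\ker \bI_B)^{\perp}$ and hence $P_{(\ker \bI_B)^{\perp}}g_{\lam}=g_{\lam}$. Writing $P=P_{(\ker \bI_B)^{\perp}}$ and using $(\bI_B f)(\lam)=\la f,g_{\lam}\ra=\la Pf,g_{\lam}\ra$, the Cauchy--Schwarz inequality gives $|(\bI_B f)(\lam)|\leq \|g_{\lam}\|_{L^2(\cH(x))}\,\|Pf\|_{L^2(\cH(x))}=\|g_{\lam}\|_{L^2(\cH(x))}\,\|\bI_B f\|_{\M(\bI_B)}$, so point evaluation at $\lam$ is bounded and $\M(\bI_B)$ is a reproducing kernel Hilbert space.

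Finally I would exhibit the kernel as $K_{\lam}=\bI_B g_{\lam}$, which lies in $\M(\bI_B)=\ran \bI_B$. For $F=\bI_B f$ the definition of the pull-back inner product together with $Pg_{\lam}=g_{\lam}$ gives
\[
\la F,K_{\lam}\ra_{\M(\bI_B)}=\la Pf,Pg_{\lam}\ra_{L^2(\cH(x))}=\la Pf,g_{\lam}\ra_{L^2(\cH(x))}=\la f,g_{\lam}\ra_{L^2(\cH(x))}=F(\lam),
\]
which is exactly the reproducing property, and evaluating $K_{\lam}=\bI_B g_{\lam}$ at $z$ produces the stated formula $\int_X\overline{B(x,\lam)}B(x,z)k_{\lam}(x,z)\,d\sigma(x)$. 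The concluding ``in particular'' assertion is then immediate, since $\M(\bI_B)=\ran \bI_B$ means every $F$ in $\M(\bI_B)$ equals $\bI_B f$ for some $f$ in $L^2(\cH(x))$, i.e. $F(z)=\int_X B(x,z)f(x,z)\,d\sigma(x)$.
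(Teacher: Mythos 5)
Your proposal is correct and follows essentially the same route as the paper: both hinge on the element $h_{\lam}(x,z)=\overline{B(x,\lam)}k_{\lam}(x,z)$, the identity $(\bI_B f)(\lam)=\la f,h_{\lam}\ra_{L^2(\cH(x))}$, the resulting orthogonality $h_{\lam}\perp\ker\bI_B$, and the pull-back inner product to identify $\bI_B h_{\lam}$ as the reproducing kernel. The only difference is one of detail, not of method: you justify the key identity by density of $\cK$ and the estimate (\ref{eq:2-1}), whereas the paper applies the fiberwise reproducing property directly, relying on the structure of $L^2(\cH(x))$ established in Section 2.
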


\begin{proof}
We set $h_{\lam}(x,z)=\overline{B(x,\lam)}k_{\lam}(x,z)$. 
If $g$ belongs to $\ker \bI_B$, then
\begin{align*}
\la g, h_{\lam} \ra_{L^2(\cH(x))}
&=\int_{X}\la g(x,\cdot), h_{\lam}(x,\cdot) \ra_{\cH(x))}\ d\sigma(x)\\
&=\int_{X}B(x,\lam)g(x,\lam)\ d\sigma(x)\\
&=(\bI_B g)(\lam)\\
&=0. 
\end{align*}
This concludes that $h_{\lam}$ belongs to $(\ker \bI_B)^{\perp}$. 
Hence, for any function $f$ in $L^2(\cH(x))$, we have that 
\begin{align*}
\la \bI_B f, \bI_B h_{\lam} \ra_{\M(\bI)}
&=\la f,  h_{\lam}\ra_{L^2(\cH(x))}\\
&=\int_{X}\la f(x,\cdot),  h_{\lam}(x,\cdot)\ra_{\cH(x)}\ d\sigma(x)\\
&=\int_{X}B(x,\lam)f(x,\lam)\ d\sigma(x)\\
&=(\bI_B f)(\lam).
\end{align*}
Therefore, 
$\bI_B h_{\lam}$ is the reproducing kernel of $\M(\bI_B)$. 
\end{proof}

\begin{rem}\rm 

In de Branges~\cite{de Branges2}, the following type of integral operator was studied: 
\[
\bJ_B: f\mapsto \int_Xf(x,B(x,z))\ d\sigma(x)\quad (f\in L^2(\cH(x))). 
\]
In our framework, 
under suitable conditions on $k$ and $B$, 
it is shown that 
$(\bJ_Bf)(\lam)$ is finite for any $\lam$ in $\Omega$ and 
\[
|(\bJ_B f)(\lam)-(\bJ_B g)(\lam)|\leq \|f-g\|_{L^2(\cH(x))}
\left( \int_Xk(x,B(x,\lam),B(x,\lam))\ d\sigma(x)\right)^{1/2}
\]
for any $f$ and $g$ in $L^2(\cH(x))$. It follows from this estimate that 
$\M(\bJ_B)$ is a reproducing kernel Hilbert space with kernel function 
\[
\int_Xk(x,B(x,z),B(x,\lam))\ d\sigma(x). 
\]
\end{rem}

\subsection{Examples}

\begin{ex}[Paley--Wiener spaces]\rm 
Let $A$ be a positive constant. 
We consider kernel function 
\[
\frac{\sin (2\pi A(z-\overline{w}))}{\pi(z-\overline{w})}
\]
defined on $\C\times \C$. 
Then, the reproducing kernel Hilbert space generated by this kernel function  
is a Paley--Wiener space, which will be dented as $PW_A$. 
In Theorem \ref{thm:2-2}, 
we consider the case where $k(t,z,w)=1$, $X=[-A,A]$, $d\sigma=dt$ and $B(t,z)=e^{-2\pi i zt}$. Then, 
$\cH(t)=\C$, 
$L^2(\cH(t))=L^2([-A,A])$. 
Hence, 
\[
\bI_B: 
f(t,z) \mapsto F(z)=\int_{-A}^A B(t,z)f(t)\ dt \quad (f\in L^2([-A,A])).
\]
is the time-limited Fourier transform. 
Moreover, 
$\M(\bI_B; L^2([-A,A]))=PW_A$ as reproducing kernel Hilbert spaces. 
To see this, by Theorem \ref{thm:2-2}, it suffices to show that $\M(\bI_B)$ has the same reproducing kernel as that of $PW_A$.   
Setting $h_{\lam}(t,z)=\overline{B(t,\lam )}k_{\lam}(t,z)$, we have that 
\[
(\bI_B h_{\lam})(z)
=\int_{-A}^AB(t,z)h_{\lam}(t,z)\ dt\\
=\int_{-A}^Ae^{-2\pi i t(z-\overline{\lam})}\ dt\\
=\frac{\sin (2\pi A(z-\overline{\lam}))}{\pi(z-\overline{\lam})}. 
\]
Thus, we have $\M(\bI_B)=PW_A$. 
\end{ex}

\begin{ex}[Herglotz spaces]\label{sec:1-1-6}\rm 
Let $\varphi$ be a Herglotz function, that is, 
$\varphi$ is a holomorphic function on $\D$ and $\operatorname{Re}\varphi(z)\geq 0$ and $\varphi(0)=1$.  
Then, 
$\varphi$ has the following integral representation: 
\[
\varphi(z)=\int_{\bT}\frac{1+\xi z}{1-\xi z}\ d\mu(\xi),
\]
where $\mu$ is a probability measure on the unit circle $\bT$. 
The integrand 
\[
\varphi_{\xi}(z)=\frac{1+\xi z}{1-\xi z}
\]
is a typical example of a Herglotz function. 
Then, since
\begin{equation}\label{eq:3-3}
\frac{\overline{\varphi_{\xi}(w)}+\varphi_{\xi}(z)}{1-\overline{w}z}=
\frac{2}{(1-\overline{\xi w})(1-\xi z)},
\end{equation}
we have the following well-known fact.
\begin{prop}\label{prop:1-3}
Let $\varphi$ be a Herglotz function. Then, 
\[
\dfrac{\overline{\varphi(w)}+\varphi(z)}{1-\overline{w}z}
\]
is a kernel function on $\D\times \D$.
\end{prop}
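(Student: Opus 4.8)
The plan is to reduce the assertion to the elementary identity (\ref{eq:3-3}) by means of the integral representation of $\varphi$, and then to recognize the resulting kernel as a superposition of rank-one positive kernels. First I would record that, since $\mu$ is a positive (hence real) probability measure, complex conjugation passes through the integral representation: $\overline{\varphi(w)}=\int_{\bT}\overline{\varphi_{\xi}(w)}\,d\mu(\xi)$. Combining this with $\varphi(z)=\int_{\bT}\varphi_{\xi}(z)\,d\mu(\xi)$ and dividing by $1-\overline{w}z$, I obtain, for $z,w\in\D$,
\[
\frac{\overline{\varphi(w)}+\varphi(z)}{1-\overline{w}z}
=\int_{\bT}\frac{\overline{\varphi_{\xi}(w)}+\varphi_{\xi}(z)}{1-\overline{w}z}\,d\mu(\xi)
=\int_{\bT}\frac{2}{(1-\overline{\xi w})(1-\xi z)}\,d\mu(\xi),
\]
where the last equality is exactly (\ref{eq:3-3}).

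Next I would verify positive semidefiniteness directly from this representation. For each fixed $\xi\in\bT$ the integrand factors as $\overline{h_{\xi}(w)}\,h_{\xi}(z)$ with $h_{\xi}(z)=\sqrt{2}/(1-\xi z)$, so it is a rank-one kernel. Concretely, given finitely many points $z_1,\dots,z_n$ in $\D$ and scalars $c_1,\dots,c_n$ in $\C$, expanding the double sum and collecting factors gives
\[
\sum_{i,j=1}^n c_i\overline{c_j}\,\frac{2}{(1-\overline{\xi z_j})(1-\xi z_i)}
=2\left|\sum_{i=1}^n\frac{c_i}{1-\xi z_i}\right|^2\geq 0 .
\]
Since the double sum is finite, the interchange of summation with the integral is immediate, and integrating the nonnegative integrand against the positive measure $\mu$ yields
\[
\sum_{i,j=1}^n c_i\overline{c_j}\,\frac{\overline{\varphi(z_j)}+\varphi(z_i)}{1-\overline{z_j}z_i}
=\int_{\bT}2\left|\sum_{i=1}^n\frac{c_i}{1-\xi z_i}\right|^2 d\mu(\xi)\geq 0 ,
\]
which is precisely the positive semidefiniteness required.

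There is no serious obstacle here; the statement is essentially a repackaging of (\ref{eq:3-3}). The only points deserving a line of care are the commutation of conjugation with the integral, which rests on $\mu$ being a real measure, and the fact that the representation is literally valid for each $z,w\in\D$ because, with $|\xi|=1$ and $|z|,|w|<1$, all denominators stay bounded away from zero as $\xi$ ranges over the compact circle $\bT$, so the integrand is bounded and the integrals converge. Everything else is the bookkeeping of expanding the finite double sum and recognizing the perfect square.
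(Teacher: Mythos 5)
Your proof is correct and follows the same route the paper intends: the paper states Proposition \ref{prop:1-3} as an immediate consequence of the identity (\ref{eq:3-3}) together with the Herglotz integral representation, i.e.\ the kernel is a superposition, against the probability measure $\mu$, of the rank-one positive kernels $2/\bigl((1-\overline{\xi w})(1-\xi z)\bigr)$. You have simply written out in full the positive-semidefiniteness verification that the paper leaves implicit.
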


The reproducing kernel Hilbert space generated by the kernel function in Proposition \ref{prop:1-3} 
is called a Herglotz space, which will be dented as $\cL(\varphi)$. 
Herglotz spaces were introduced and studied by de Branges--Rovnyak~\cite{dBR} 
(see also Ghosechowdhury~\cite{G1,G2} and Shulman~\cite{S}). 
Next, we consider the integral operator defined as follows: 
\[
\bI: 
f(\xi,z) \mapsto F(z)=\int_{\bT} f(\xi,z)\ d\mu(\xi) \quad (f\in L^2(\cL(\varphi_{\xi}))),
\]
which is obtained by setting $B\equiv 1$. 
Then,  
$\M(\bI; L^2(\cL(\varphi_{\xi})))=\cL(\varphi)$ as reproducing kernel Hilbert spaces. 
To see this, by Theorem \ref{thm:2-2}, 
it suffices to show that $\M(\bI)$ has the same reproducing kernel as that of $\cL(\varphi)$.   
Setting $k_{\lam}(\xi,z)=k(\xi,z,\lam)$, we have that 
\[
(\bI k_{\lam})(z)=\int_{\bT}k_{\lam}(\xi,z)\ d\mu(\xi)
=\int_{\bT} \frac{\overline{\varphi_{\xi}(\lam)}+\varphi_{\xi}(z)}{1-\overline{\lam} z}\ d\mu(\xi)
=\frac{\overline{\varphi(\lam)}+\varphi(z)}{1-\overline{\lam} z}. 
\]
Thus, we have $\M(\bI)=\cL(\varphi)$. 
\end{ex}

\section{L\"{o}wner expansions}
Let $\{B_{rs}(z)\}_{0\leq r\leq s}$ be a L\"{o}wner semigroup, that is, 
each $B_{rs}(z)$ is a univalent (i.e. holomorphic and injective) self-map of $\D$ satisfying 
the nonlinear L\"{o}wner equation
\begin{equation}\label{eq:5-1}
\frac{\p B_{rs}}{\p s}(z)=-B_{rs}(z)\varphi(s,B_{rs}(z)),
\end{equation}
where $\{\varphi(t,z)\}_{0<t<\infty}$ is a family of Herglotz functions such that 
$\varphi(t,z)$ is a measurable function of $t$ for each fixed $z$ 
(see Chapter 8 in Rosenblum--Rovnyak~\cite{RR} for further details). 
In what follows, we will write $B_t=B_{at}$ for short if no confusion occurs. 
Then, for any $t\geq 0$,
\[
k(t, z,w)=\frac{\overline{\varphi(t,B_t(w))}+\varphi(t,B_t(z))}{1-\overline{w} z}
\]
is a kernel function defined on $\D\times \D$ by Proposition \ref{prop:1-3}. 
Then, trivially, Assumptions (A) and (B) are valid. 
Moreover, Assumptions (C) and (D) follow from the following estimate:
\begin{equation*}
0\leq k(t,\lam,\lam)=\dfrac{2\Real\varphi(t,B_t(\lam))}{1-|\lam|^2}\leq 
\dfrac{2(1+|B_t(\lam)|)}{(1-|\lam|^2)(1-|B_t(\lam)|)}
\end{equation*}
for any $\lam$ in $\D$ (see (8--10) in Page 193 of Rosenblum--Rovnyak~\cite{RR}). 
Moreover, trivially, Assumption (E) is valid for $B(t,z)=B_t(z)$.  

\begin{lem}\label{lem:2-1}
Let $B_t(z)=B_{at}(z)$ $(t\geq a\geq 0)$ be the L\"{o}wner semigroup. Then, 
\[
\dfrac{\p}{\p t} \dfrac{1-\overline{B_{t}(\lam)}B_{t}(z)}{1-\overline{\lam}z}
=\dfrac{\overline{\varphi(t,B_{t}(\lam))}+\varphi(t,B_{t}(z))}{1-\overline{\lam}z}
\overline{B_{t}(\lam)}B_{t}(z).
\]
\end{lem}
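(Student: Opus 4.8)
The plan is to compute the $t$-derivative directly, exploiting the fact that the denominator $1-\overline{\lam}z$ does not depend on $t$. Thus it suffices to differentiate the numerator $N(t)=1-\overline{B_t(\lam)}B_t(z)$ and then divide by the constant $1-\overline{\lam}z$. Since $t$ is a real parameter, differentiation commutes with complex conjugation, so $\p_t\overline{B_t(\lam)}=\overline{\p_t B_t(\lam)}$; this is the one spot where care is needed to avoid sign or conjugation errors.

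First I would apply the product rule to obtain
\[
\frac{\p N}{\p t}=-\,\overline{\frac{\p B_t(\lam)}{\p t}}\,B_t(z)-\overline{B_t(\lam)}\,\frac{\p B_t(z)}{\p t}.
\]
Next I would substitute the L\"{o}wner equation (\ref{eq:5-1}), which for $B_t=B_{at}$ reads $\p_t B_t(z)=-B_t(z)\varphi(t,B_t(z))$, together with its conjugate evaluated at $\lam$, namely $\overline{\p_t B_t(\lam)}=-\overline{B_t(\lam)}\,\overline{\varphi(t,B_t(\lam))}$. After substitution the two minus signs cancel and the common factor $\overline{B_t(\lam)}B_t(z)$ emerges:
\[
\frac{\p N}{\p t}=\overline{B_t(\lam)}B_t(z)\left(\overline{\varphi(t,B_t(\lam))}+\varphi(t,B_t(z))\right).
\]
Dividing by $1-\overline{\lam}z$ then yields exactly the asserted identity.

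The computation itself is routine; the only substantive point is justifying that the relevant derivatives exist and that the L\"{o}wner equation may be used pointwise in each of the two arguments $z$ and $\lam$. This is guaranteed by the standing hypothesis that $\{B_{rs}\}$ is a L\"{o}wner semigroup solving (\ref{eq:5-1}) with each $B_t$ holomorphic on $\D$, so that both $\p_t B_t(z)$ and $\p_t B_t(\lam)$ are well defined. Hence I expect no real obstacle beyond the bookkeeping of the conjugation: the main thing to verify carefully is the identity $\p_t\overline{B_t(\lam)}=\overline{\p_t B_t(\lam)}$ and the ensuing sign cancellation that produces the single factor $\overline{B_t(\lam)}B_t(z)$.
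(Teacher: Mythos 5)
Your proposal is correct and is exactly the computation the paper has in mind: the paper's own proof consists of the single sentence ``By the nonlinear L\"{o}wner equation (\ref{eq:5-1}), we have the conclusion,'' leaving the product rule, the conjugation identity $\p_t\overline{B_t(\lam)}=\overline{\p_t B_t(\lam)}$, and the sign cancellation implicit. You have simply written out the details that the paper suppresses.
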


\begin{proof}
By the nonlinear L\"{o}wner equation (\ref{eq:5-1}), 
we have the conclusion. 
\end{proof}

Let $\cH(B_t)$ denote the reproducing kernel Hilbert space 
generated by kernel function 
\[
\dfrac{1-\overline{B_t(w)}B_t(z)}{1-\overline{w}z},
\]
which is called the de Branges--Rovnyak space induced by $B_t$ (see de Branges--Rovnyak~\cite{dBR, dBR2}, 
Fricain--Mashreghi~\cite{FM} and Sarason~\cite{Sarason}). 
Every function in $\cH(B_t)$ has a Fourier type integral representation. 

\begin{thm}[Ghosechowdhury~\cite{G2}]\label{thm:5-2}
Let $B_t(z)=B_{at}(z)$ $(t\geq a\geq 0)$ be a L\"{o}wner semigroup. Then, 
\[
\cH(B_b)=\C \oplus \M(\bI_B; L^2(\cL(\varphi(t,B_t)))).
\]
\end{thm}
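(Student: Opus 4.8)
The plan is to identify the two spaces by comparing their reproducing kernels, using Moore--Aronszajn to conclude that equal kernels force equal reproducing kernel Hilbert spaces. By Theorem~\ref{thm:2-2}, taking $X=[a,b]$, $d\sigma=dt$ and $B(t,z)=B_t(z)$, the reproducing kernel of $\M(\bI_B; L^2(\cL(\varphi(t,B_t))))$ is
\[
\int_a^b \overline{B_t(\lam)}B_t(z)\,k(t,z,\lam)\ dt.
\]
The first observation I would make is that Lemma~\ref{lem:2-1} says the integrand is precisely the $t$-derivative of the de Branges--Rovnyak kernel, namely
\[
\overline{B_t(\lam)}B_t(z)\,k(t,z,\lam)=\frac{\p}{\p t}\,\frac{1-\overline{B_t(\lam)}B_t(z)}{1-\overline{\lam}z}.
\]
Thus the integral telescopes, and the crucial step is to apply the fundamental theorem of calculus in the $t$-variable.

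Carrying this out, I would rewrite the kernel of $\M(\bI_B)$ as the difference
\[
\frac{1-\overline{B_b(\lam)}B_b(z)}{1-\overline{\lam}z}-\frac{1-\overline{B_a(\lam)}B_a(z)}{1-\overline{\lam}z}.
\]
Since $B_a=B_{aa}$ is the identity map of $\D$ (which one reads off from the L\"owner equation~(\ref{eq:5-1}) together with the initial condition $B_{aa}(z)=z$), the second term is identically $1$. Hence the reproducing kernel of $\M(\bI_B)$ is
\[
\frac{1-\overline{B_b(\lam)}B_b(z)}{1-\overline{\lam}z}-1,
\]
that is, the de Branges--Rovnyak kernel of $\cH(B_b)$ minus the constant kernel $1$.

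It then remains to recognise this as the kernel of $\cH(B_b)\ominus\C$. The same equation~(\ref{eq:5-1}) forces $B_t(0)=0$ for all $t$, so evaluating the de Branges--Rovnyak kernel at $\lam=0$ returns the constant function $1$; in other words $1$ is the reproducing kernel of $\cH(B_b)$ at the origin and $\|1\|_{\cH(B_b)}^2=1$. Consequently $\C=\Span\{1\}$ is a one-dimensional subspace of $\cH(B_b)$, and the standard formula for the reproducing kernel of an orthogonal complement spanned by a single kernel vector gives
\[
\frac{1-\overline{B_b(\lam)}B_b(z)}{1-\overline{\lam}z}-\frac{1\cdot\overline{1}}{1}=\frac{1-\overline{B_b(\lam)}B_b(z)}{1-\overline{\lam}z}-1
\]
for $\cH(B_b)\ominus\C$. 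This coincides with the kernel of $\M(\bI_B)$ computed above, so the two spaces are equal and $\cH(B_b)=\C\oplus\M(\bI_B; L^2(\cL(\varphi(t,B_t))))$.

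The main obstacle I anticipate is the justification of the fundamental theorem of calculus step: I must verify that $t\mapsto \p_t\,\frac{1-\overline{B_t(\lam)}B_t(z)}{1-\overline{\lam}z}$ is genuinely integrable and that the telescoping is legitimate, which rests on the regularity of $t\mapsto B_t(z)$ supplied by the L\"owner equation together with the integrability Assumptions (C) and (E) already in force. A secondary point requiring care is extracting the two normalisation facts $B_a=\mathrm{id}$ and $B_t(0)=0$ cleanly from~(\ref{eq:5-1}); both follow from uniqueness of solutions of the initial value problem, but they are exactly what pin down the constant term and the precise role of the summand $\C$.
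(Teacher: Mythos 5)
Your proposal is correct and follows essentially the same route as the paper's proof: both identify the reproducing kernel of $\M(\bI_B)$ via Theorem~\ref{thm:2-2}, rewrite the integrand using Lemma~\ref{lem:2-1}, integrate by the fundamental theorem of calculus to obtain $\frac{1-\overline{B_b(\lam)}B_b(z)}{1-\overline{\lam}z}-1$, and recognise this as the kernel of $\cH(B_b)\ominus\C$. Your explicit justifications of $B_a=\mathrm{id}$, $B_t(0)=0$, and the one-dimensional orthocomplement kernel formula merely spell out details the paper leaves implicit.
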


\begin{proof}
We will write $h_{\lam}(t,z)=\overline{B_t(\lam)}k_{\lam}(t,z)$ for any $\lam$ in $\D$. 
It follows from Theorem \ref{thm:2-2} and Lemma \ref{lem:2-1} that 
\begin{align*}
\la \bI_B h_{\lam}, \bI_B h_{\mu} \ra_{\M(\bI_B)}
&=\la  h_{\lam},  h_{\mu} \ra_{L^2(\cL(\varphi(t,B_{t})))}\\
&=\int_a^b \la \overline{B_t(\lam)}k_{\lam}(t,\cdot),\overline{B_t(\mu)}k_{\mu}(t,\cdot) \ra_{\cL(\varphi(t,B_{t}))}\ dt\\
&=\int_a^b \overline{B_t(\lam)}B_t(\mu)k(t,\mu,\lam)\ dt\\
&=\int_a^b \dfrac{\p}{\p t}\dfrac{1-\overline{B_t(\lam)}B_t(\mu)}{1-\overline{\lam}\mu}\ dt\\
&=\dfrac{1-\overline{B_b(\lam)}B_b(\mu)}{1-\overline{\lam}\mu}-1.
\end{align*}
Since $1$ belongs to $\cH(B_b)$, 
\[
\dfrac{1-\overline{B_b(\lam)}B_b(z)}{1-\overline{\lam}z}-1
\]
is the reproducing kernel of $\cH(B_b)\ominus \C$, where the orthogonal complement is taken with respect to the inner product of $\cH(B_b)$. 
Hence, we have that $\M(\bI_B)=\cH(B_b)\ominus \C$. 
This concludes the proof. 
\end{proof}

Here, we would like to note that 
Ghosechowdhury's theorem suggests a new insight to the problem 
\begin{center}
``\textit{Find concrete elements in de Branges--Rovnyak spaces}''
\end{center}
discussed in (II-8) of Sarason~\cite{Sarason}, Sections 18.6 and 23.8 of Fricain--Mashreghi~\cite{FM}, Fricain--Hartmann--Ross~\cite{FHR} and {\L}anucha--Nowak~\cite{LN}. 
Combining 
Theorem \ref{thm:5-2} with 
the observations in Example \ref{sec:1-1-6}, 
we see that every function in $\cH(B_{b})$ has a double integral representation. 
Indeed, let $F$ be any function in $\cH(B_b)$. 
Then, by Theorem \ref{thm:5-2}, 
there exists some function $f$ in $L^2(\cL(\varphi(t,B_{t})))$ such that  
\[
F(z)=F(0)+(\bI_B f)(z)=F(0)+\int_a^bB_t(z)f(t,z)\ dt. 
\]
Moreover, for almost all $t$, 
it follows from the observation in Example \ref{sec:1-1-6} 
that there exists some function $g$ in $L^2(\cL(\varphi_{\xi}(B_{t})))$ such that
\[
f(t,z)=\int_{\bT} g(\xi, t, z)\ d\mu_t(\xi).
\]
Hence, we have that 
\[
F(z)=F(0)+\int_a^bB_t(z)\left(\int_{\bT} g(\xi, t, z)\ d\mu_t(\xi)\right)\ dt. 
\]
Furthermore, 
since 
\[
\frac{\overline{\varphi_{\xi}(B_t(\lam))}+\varphi_{\xi}(B_t(z))}{1-\overline{\lam}z}
=\frac{1-\overline{B_t(\lam)}B_t(z)}{1-\overline{\lam}z}\cdot
\frac{2}{\left(1-\overline{\xi B_t(\lam)}\right)\left( 1-\xi B_t(z)\right)}
\]
by (\ref{eq:3-3}), 
\begin{equation}\label{eq:2-3}
F(z)=\int_a^b
B_t(z)\frac{1-\overline{B_t(\lam)}B_t(z)}{1-\overline{\lam}z}\left(\int_{\bT}\frac{h(\xi,t)}{\left(1-\overline{\xi B_t(\lam)}\right)\left(1-\xi B_t(z)\right)}\ d\mu_t(\xi)\right)\ dt
\end{equation}
belongs to $\cH(B_{b})$ for any bounded Borel function $h$. 
We shall give an answer to Question 6.6 in \cite{FHR}. 

\begin{cor}\label{cor:3-7}
Let $B_t(z)=B_{at}(z)$ $(t\geq a\geq 0)$ be the L\"{o}wner semigroup associated with the Koebe functions
\[
f(t,z)=\frac{e^tz}{(1-z)^2}\quad (t\geq 0).
\]
Then, 
\[
\log \frac{1-B_t(z)}{1-z}
\]
belongs to $\cH(B_t)$. 
\end{cor}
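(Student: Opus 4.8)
The plan is to exhibit $\log\frac{1-B_b(z)}{1-z}$ explicitly as $\bI_B f$ for a concrete $f$ in $L^2(\cL(\varphi(t,B_t)))$ and then invoke Theorem \ref{thm:5-2}; here I write $b$ for the fixed parameter (the ``$t$'' of the statement) and reserve $s$ for the integration variable running over $[a,b]$.

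First I would identify the driving Herglotz family. Since $f(t,z)=e^tz/(1-z)^2$ is a L\"owner chain, its generator can be read off from $\p_t f=z\,\p_z f\,\varphi(t,z)$, and a direct computation gives
\[
\varphi(t,z)=\frac{\p_t f}{z\,\p_z f}=\frac{1-z}{1+z},
\]
independent of $t$. In the notation of Example \ref{sec:1-1-6} this is $\varphi_{-1}$, i.e. the representing measure is the point mass $\delta_{-1}$, so the inner circle integral in \eqref{eq:2-3} degenerates to a single term. Next I would differentiate along the flow. Because the origin is fixed by \eqref{eq:5-1} we have $B_s(0)=0$, hence the value at $z=0$ vanishes and $F_a\equiv 0$. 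Using \eqref{eq:5-1} to compute $\p_s B_s$,
\[
\frac{\p}{\p s}\log\frac{1-B_s(z)}{1-z}=\frac{-\p_s B_s(z)}{1-B_s(z)}=\frac{B_s(z)\varphi(s,B_s(z))}{1-B_s(z)}=\frac{B_s(z)}{1+B_s(z)},
\]
the last equality substituting $\varphi(s,B_s(z))=\frac{1-B_s(z)}{1+B_s(z)}$. Integrating from $a$ to $b$ yields
\[
\log\frac{1-B_b(z)}{1-z}=\int_a^b B_s(z)\,f(s,z)\,ds,\qquad f(s,z)=\frac{1}{1+B_s(z)},
\]
so that $F_b=\bI_B f$ for this candidate $f$.

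The crux is to check $f\in L^2(\cL(\varphi(s,B_s)))$, and here I would use the observation that $f$ is a scalar multiple of a reproducing kernel. Evaluating the kernel $k(s,z,w)=\frac{\overline{\varphi(s,B_s(w))}+\varphi(s,B_s(z))}{1-\overline{w}z}$ of $\cL(\varphi(s,B_s))$ at $w=0$ and using $\varphi(s,0)=1$ gives
\[
k(s,z,0)=1+\frac{1-B_s(z)}{1+B_s(z)}=\frac{2}{1+B_s(z)},
\]
hence $f(s,\cdot)=\tfrac12 k_0(s,\cdot)$. Consequently $f\in\cK$, and
\[
\|f(s,\cdot)\|_{\cL(\varphi(s,B_s))}^2=\tfrac14\,k(s,0,0)=\tfrac12,
\]
which is integrable on $[a,b]$; thus $f\in L^2(\cL(\varphi(s,B_s)))$. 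Applying Theorem \ref{thm:5-2}, $F_b=\bI_B f\in\M(\bI_B)\subseteq\cH(B_b)$, which is the assertion.

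The main obstacle I anticipate is at the level of rigor rather than strategy: justifying the differentiation under the integral of $s\mapsto\log(1-B_s(z))$ (smoothness of the L\"owner flow together with the principal branch of the logarithm, both unproblematic since $|B_s(z)|<1$ keeps $1-B_s(z)$ in the right half-plane) and confirming that the pointwise integral $\int_a^b B_s(z)f(s,z)\,ds$ genuinely represents $\bI_B f$. The two conceptual steps---pinning down $\varphi=\varphi_{-1}$ for the Koebe chain and recognizing $1/(1+B_s(z))$ as half a reproducing kernel---are precisely what collapse the norm estimate to a one-line computation.
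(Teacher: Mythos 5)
Your proof is correct and is essentially the paper's own argument: the paper specializes its representation formula (\ref{eq:2-3}) to $\mu_t=\delta_{-1}$, $h\equiv 1$ and $\lam=0$, which yields precisely your integrand $B_t(z)/(1+B_t(z))$ (i.e.\ $\bI_B$ applied to $f(t,\cdot)=\tfrac12 k_0(t,\cdot)$), and then runs the same L\"{o}wner-equation computation $\frac{B_t(z)}{1+B_t(z)}=\frac{\p}{\p t}\log\left(1-B_t(z)\right)$ to identify the result as $\log\frac{1-B_b(z)}{1-z}$. Your only deviation---checking directly that $1/(1+B_s(z))=\tfrac12 k_0(s,\cdot)$ lies in $\cK\subseteq L^2(\cL(\varphi(s,B_s)))$ instead of quoting (\ref{eq:2-3})---inlines the same fact rather than giving a different method.
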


\begin{proof}
First, note that 
$B_t/z$ is outer by Theorem 3.17 in Duren~\cite{Duren}, 
and is extreme by Figure 8.2 in Rosenblum--Rovnyak~\cite{RR}. 
Let $b$ be an arbitrary point satisfying $a\leq b$. 
Since $\mu_t$ is the Dirac measure centered on $\xi=-1$ for all $t\geq 0$ 
and
\[
\varphi(t,z)=\varphi_{-1}(z)=\frac{1-z}{1+z}
\]
for the Koebe functions, 
it follows from (\ref{eq:2-3}) that  
\[
F(z)=\int_a^b 
B_t(z)\frac{1-\overline{B_t(\lam)}B_t(z)}{1-\overline{\lam}z}
\frac{h(t)}{\left(1+\overline{B_t(\lam)}\right)\left(1+B_t(z)\right)}\ dt
\]
belongs to $\cH(B_b)$ for any bounded Borel function $h$ and any $\lam$ in $\D$. 
Setting $h(t)=1$ and $\lam=0$, by the nonlinear L\"{o}wner equation (\ref{eq:5-1}), we have that 
\begin{align*}
F(z)
&=\int_a^b \frac{B_t(z)}{1+B_t(z)}\ dt\\
&=\int_a^b\frac{-1}{1-B_t(z)}\left(-B_t(z)\frac{1-B_t(z)}{1+B_t(z)}\right)\ dt\\
&=\int_a^b \frac{-1}{1-B_t(z)}\frac{\p B_t}{\p t}(z)\ dt\\
&=\int_a^b \frac{\p}{\p t}\log\left(1-B_t(z) \right)\ dt\\
&=\log \frac{1-B_b(z)}{1-z}.
\end{align*}
This concludes the proof. 
\end{proof}

\section{Pick spaces}\label{sec:6}

Let $\varphi$ be a Pick function, that is, 
$\varphi$ is a holomorphic self-map of the upper half-plane $\bH=\{z\in \C: \operatorname{Im}z>0\}$. 
Every Pick function has a Nevanlinna representation 
\begin{equation}\label{eq:5-0}
\varphi(z)=b+cz+\frac{1}{\pi}\int_{-\infty}^{\infty}\left( \frac{1}{t-z}-\frac{t}{1+t^2}\right)\ d\mu(t),
\end{equation}
where $b$ is real, $c\geq 0$ and $\mu$ is a nonnegative Borel measure on $(-\infty,\infty)$ satisfying
\[
\int_{-\infty}^{\infty}\frac{1}{1+t^2}\ d\mu(t)<\infty,
\]
and the converse is also true (see Theorem 5.3 in Rosenblum--Rovnyak~\cite{RR}). 

\begin{ex}\label{ex:5-1}\rm
Setting 
\[
\varphi_{\xi}(z)=\frac{1}{\xi -z}\quad (\xi \in \R),
\]
this $\varphi_{\xi}$ is a typical example of a Pick function, 
which is obtained from (\ref{eq:5-0}) if we choose $\mu$ as the Dirac measure centered on $\xi$. 
\end{ex}

Let $\varphi$ be a Pick function. 
Then, $\psi$ will denote the function defined by the following diagram:
\begin{equation}\label{eq:6-2}
\begin{CD}
\bH @>{\varphi}>> \bH \\
@A{T}AA    @VV{T^{-1}}V \\
\D  @>>{\psi}>  \D,
\end{CD}
\end{equation}
where 
\[
T(z)=i\dfrac{1+z}{1-z}\quad \text{and}\quad T^{-1}(w)=\frac{w-i}{w+i}.
\]
Then, $\psi$ is a holomorphic self-map of $\D$. 
Let $\cH(\psi)$ denote the de Branges--Rovnyak space induced by $\psi$, that is, 
$\cH(\psi)$ is the reproducing kernel Hilbert space generated by kernel function
\[
\frac{1-\overline{\psi(w)}\psi(z)}{1-\overline{w}z}.
\] 
Let $\alpha$ and $\beta$ be arbitrary two points in $\bH$. 
We set $\lam=T^{-1}(\alpha)$ and $\mu=T^{-1}(\beta)$. 
Then, by direct calculation, we have that 
\begin{equation}\label{eq:6-3}
\frac{\varphi(\beta)-\overline{\varphi(\alpha)}}{\beta-\overline{\alpha}}
=\frac{1-\overline{\lam}}{1-\overline{\psi(\lam)}} \cdot \frac{1-\mu}{1-\psi(\mu)} \cdot \frac{1-\overline{\psi(\lam)}\psi(\mu)}{1-\overline{\lam}\mu}.
\end{equation}
By (\ref{eq:6-3}) and Schur's theorem,
we have the following well-known fact. 
\begin{prop}\label{prop:5-2}
Let $\varphi$ be a Pick function. Then, 
\[
\frac{\varphi (z)-\overline{\varphi (w)}}{z-\overline{w}}
\]
is a kernel function on $\bH \times \bH$.
\end{prop}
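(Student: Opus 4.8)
The plan is to read the identity (\ref{eq:6-3}) as a factorization of the Pick kernel into a product of two manifestly positive semidefinite pieces, and then to invoke the Schur product theorem. The first preparatory observation is that the Cayley map $T\colon \D\to\bH$ appearing in (\ref{eq:6-2}) is a bijection, so that checking positive semidefiniteness of $\dfrac{\varphi(z)-\overline{\varphi(w)}}{z-\overline{w}}$ at finitely many points $\beta_1,\dots,\beta_n$ in $\bH$ is equivalent to checking it at the corresponding preimages $\mu_j=T^{-1}(\beta_j)$ in $\D$; positive semidefiniteness of a kernel is invariant under such a bijective reparametrization of the index set.

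Next I would isolate the two factors on the right-hand side of (\ref{eq:6-3}). Writing $g(\mu)=\dfrac{1-\mu}{1-\psi(\mu)}$, the first two factors combine as $\overline{g(\lam)}\,g(\mu)$, which is a rank-one, hence positive semidefinite, kernel on $\D\times\D$. The third factor is exactly the de Branges--Rovnyak kernel $\dfrac{1-\overline{\psi(w)}\psi(z)}{1-\overline{w}z}$ of $\psi$; since $\psi$ is a holomorphic self-map of $\D$ by the construction in (\ref{eq:6-2}), this is a kernel function on $\D\times\D$, as already built into the definition of $\cH(\psi)$. Thus, after the substitution $\lam=T^{-1}(\alpha)$ and $\mu=T^{-1}(\beta)$, the identity (\ref{eq:6-3}) exhibits the Pick kernel as the Hadamard product of these two positive semidefinite kernels on $\D\times\D$. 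By Schur's (product) theorem the product is again positive semidefinite, and transporting the result back through the bijection $T$ yields positive semidefiniteness of the Pick kernel on $\bH\times\bH$, which is the claim.

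There is essentially no hard step here: the entire analytic content has been front-loaded into the algebraic identity (\ref{eq:6-3}) and into the standard positivity of the de Branges--Rovnyak kernel of the Schur-class function $\psi$. The only points deserving a line of care are that $g$ is well defined, i.e.\ that $1-\psi(\mu)\neq 0$ on $\D$, which holds because $\psi(\mu)\in\D$ forces $|\psi(\mu)|<1$, and that positive semidefiniteness is genuinely preserved under the reparametrization by $T$; both are routine. I expect the only place a reader might hesitate is in matching the indices of the rank-one factor $\overline{g(\lam)}\,g(\mu)$ with the conjugate-symmetric structure of (\ref{eq:6-3}), so I would make that correspondence explicit before applying the Schur product theorem.
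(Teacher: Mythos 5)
Your proof is correct and is essentially the paper's own argument: the paper likewise deduces the proposition directly from the identity (\ref{eq:6-3}) together with Schur's product theorem, using that the rank-one factor and the de Branges--Rovnyak kernel of the self-map $\psi$ are both positive semidefinite. The additional details you spell out (well-definedness of $g$, invariance of positive semidefiniteness under the Cayley reparametrization) are exactly the routine points the paper leaves implicit.
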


Let $\cP(\varphi)$ denote
the reproducing kernel Hilbert space generated by the kernel function in Proposition \ref{prop:5-2}. 
We will call $\cP(\varphi)$ a Pick space.

\begin{prop}\label{thm:5-4}
Let $\varphi$ be a Pick function
with Nevanlinna representation (\ref{eq:5-0}), 
and let $\M$ denote the reproducing kernel Hilbert space over $\bH$ generated by kernel function 
\[
k(z,w)=\frac{1}{\pi}\int_{-\infty}^{\infty}\frac{1}{(t-\overline{w})(t-z)}\ d\mu(t).
\]
Then, 
\[
\cP(\varphi)=
\begin{cases}
\C + \M & (c \neq 0)\\
\M & (c=0 )
\end{cases}
\] 
as reproducing kernel Hilbert spaces,  
where we consider $\C$ to be the reproducing kernel Hilbert space consisting of constant functions on $\bH$. 
\end{prop}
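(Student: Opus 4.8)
The plan is to compute the reproducing kernel of $\cP(\varphi)$ directly from the Nevanlinna representation (\ref{eq:5-0}) and to recognize it as the sum of the kernel $k$ of $\M$ and a constant kernel; the statement then follows from the standard theory of sums of reproducing kernels.

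First I would substitute (\ref{eq:5-0}) into the Pick kernel of Proposition \ref{prop:5-2}. Since $b$, $c$ and $t$ are real and $\mu$ is a real measure, conjugation merely replaces $z$ by $\overline{w}$, so the real constant $b$ and the terms $-t/(1+t^2)$ cancel in the difference $\varphi(z)-\overline{\varphi(w)}$, leaving
\[
\varphi(z)-\overline{\varphi(w)}=c(z-\overline{w})+\frac{1}{\pi}\int_{-\infty}^{\infty}\left(\frac{1}{t-z}-\frac{1}{t-\overline{w}}\right)\ d\mu(t).
\]
Applying the elementary identity $\frac{1}{t-z}-\frac{1}{t-\overline{w}}=\frac{z-\overline{w}}{(t-z)(t-\overline{w})}$ and dividing by $z-\overline{w}$, I obtain
\[
\frac{\varphi(z)-\overline{\varphi(w)}}{z-\overline{w}}=c+k(z,w).
\]
Thus the reproducing kernel of $\cP(\varphi)$ is exactly the constant kernel $c$ plus the kernel $k$ generating $\M$.

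When $c=0$ the two kernels coincide, so $\cP(\varphi)=\M$ immediately, since a reproducing kernel Hilbert space is uniquely determined by its kernel. When $c\neq 0$ (hence $c>0$), I would invoke Aronszajn's theorem on sums of kernels: if a kernel splits as $K_1+K_2$ with $K_i$ generating $\cH_i$, then $K_1+K_2$ generates $\cH_1+\cH_2=\{f_1+f_2:f_i\in \cH_i\}$ with the minimum norm. The constant kernel $K_1(z,w)=c$ generates precisely $\C$, the one-dimensional space of constant functions with $\|\alpha\|^2=|\alpha|^2/c$, while $K_2=k$ generates $\M$; hence $\cP(\varphi)=\C+\M$. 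I would close by observing, in the spirit of the proof of Theorem \ref{thm:5-2}, that this sum is in fact orthogonal: a short dominated-convergence estimate using $\int(1+t^2)^{-1}\,d\mu(t)<\infty$ shows that $k(iy,iy)\to 0$ as $y\to\infty$, so every function in $\M$ tends to $0$ along the imaginary axis and no nonzero constant lies in $\M$; thus $\C\cap\M=\{0\}$.

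The computation of the kernel and the invocation of the sum-of-kernels theorem are both routine, so the only points demanding care are (i) checking that the Hilbert space attached to the constant kernel $c$ really is $\C$ with the rescaled norm --- this is exactly where the hypothesis $c\neq 0$ is used --- and (ii) the verification that $\C\cap\M=\{0\}$ if one wants the decomposition to be genuinely orthogonal rather than merely a set-theoretic sum. I do not expect either to pose a serious obstacle, so I anticipate this proposition to be a short consequence of the kernel identity together with Aronszajn's theorem.
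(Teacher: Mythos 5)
Your proposal is correct and follows essentially the same route as the paper: substitute the Nevanlinna representation into the Pick kernel, cancel the real terms, use the partial-fraction identity to obtain $\dfrac{\varphi(z)-\overline{\varphi(w)}}{z-\overline{w}}=c+k(z,w)$, and then apply Aronszajn's sum-of-kernels theorem. Your additional verification that $\C\cap\M=\{0\}$ (via $k(iy,iy)\to 0$ as $y\to\infty$) is a correct refinement showing the sum is actually orthogonal, but it is not needed for the statement, which only asserts the sum $\C+\M$.
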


\begin{proof}
It follows from the Nevanlinna representation (\ref{eq:5-0}) that 
\begin{align*}
\varphi(z)-\overline{\varphi(w)}
&=c(z-\overline{w})+\frac{1}{\pi}\int_{-\infty}^{\infty}\left( \frac{1}{t-z}-\frac{1}{t-\overline{w}}\right)\ d\mu(t)\\
&=c(z-\overline{w})+\frac{1}{\pi}\int_{-\infty}^{\infty}\frac{z-\overline{w}}{(t-\overline{w})(t-z)}\ d\mu(t).
\end{align*}
Hence, we have that 
\begin{equation*}
\frac{\varphi(z)-\overline{\varphi(w)}}{z-\overline{w}}
=c+\frac{1}{\pi}\int_{-\infty}^{\infty}\frac{1}{(t-\overline{w})(t-z)}\ d\mu(t).
\end{equation*}
By this identity and Aronszajn's sums of kernels theorem  
(see Theorem 5.4 in Paulsen--Raghupathi~\cite{PR}), we conclude the proof.
\end{proof}

Next, we shall give an isomorphism between $\cP(\varphi)$ and $\cH(\psi)$. 
\begin{thm}\label{lem:6-3}
Let $\varphi$ be a Pick function and $\psi$ be the function defined by (\ref{eq:6-2}). 
Then, 
\[
V: f(z)\mapsto \frac{1-T^{-1}(w)}{1-\psi\circ T^{-1}(w)}f\circ T^{-1}(w)\quad (w=T(z))
\]
gives an isometry from $\cH(\psi)$ onto $\cP(\varphi)$.   
\end{thm}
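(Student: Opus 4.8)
The plan is to read $V$ as a weighted composition operator and let the reproducing-kernel identity (\ref{eq:6-3}) do all the work. First I would rewrite $V$ in the compact form $(Vf)(w)=m(T^{-1}(w))\,f(T^{-1}(w))$, where
\[
m(z)=\frac{1-z}{1-\psi(z)}.
\]
Since $\varphi$ maps $\bH$ into $\bH$, the function $\psi=T^{-1}\circ\varphi\circ T$ maps $\D$ into $\D$, so $1-\psi(z)\neq 0$ on $\D$; together with $z\neq 1$ this shows that $m$ is holomorphic and nowhere vanishing on $\D$, so the scalar factors below make sense. Writing $k_\lam^{\psi}$ and $K_\alpha^{\cP}$ for the reproducing kernels of $\cH(\psi)$ and $\cP(\varphi)$ at $\lam\in\D$ and $\alpha\in\bH$ respectively, the goal is to prove that $V$ carries reproducing kernels to reproducing kernels isometrically, and then to extend by density.

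Second, I would compute the action of $V$ on a single kernel. Evaluating $V k_\lam^{\psi}$ at a point $\beta=T(\mu)$ of $\bH$ gives $m(\mu)\,k_\lam^{\psi}(\mu)$, and identity (\ref{eq:6-3}), read with $\alpha=T(\lam)$ and the pairing $\lam=T^{-1}(\alpha)$, $\mu=T^{-1}(\beta)$, rearranges exactly to
\[
\big(V k_\lam^{\psi}\big)(\beta)=\frac{1}{\overline{m(\lam)}}\,K_{T(\lam)}^{\cP}(\beta).
\]
Thus $V$ sends each kernel of $\cH(\psi)$ to a nonzero scalar multiple of a kernel of $\cP(\varphi)$.

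Third, I would check that inner products are preserved on the span of kernels. Using the previous step and the reproducing property,
\[
\la V k_\lam^{\psi},\,V k_{\lam'}^{\psi}\ra_{\cP(\varphi)}=\frac{1}{\overline{m(\lam)}\,m(\lam')}\,K_{T(\lam)}^{\cP}\big(T(\lam')\big),
\]
and a second application of (\ref{eq:6-3}) collapses the right-hand side to $k_\lam^{\psi}(\lam')=\la k_\lam^{\psi},k_{\lam'}^{\psi}\ra_{\cH(\psi)}$. Hence $V$ is isometric on $\Span\{k_\lam^{\psi}:\lam\in\D\}$, which is dense in $\cH(\psi)$, and therefore extends to an isometry of $\cH(\psi)$ into $\cP(\varphi)$. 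Surjectivity is immediate: since $T$ maps $\D$ onto $\bH$, the range of $V$ contains every $K_\alpha^{\cP}$, whose span is dense in $\cP(\varphi)$, and the range of an isometry is closed. A final remark reconciles this abstract extension with the formula-defined $V$: convergence in the norm of a reproducing kernel Hilbert space forces pointwise convergence, and $m$ and $T^{-1}$ are fixed, so the extension agrees with $f\mapsto m\cdot(f\circ T^{-1})$ on all of $\cH(\psi)$.

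The computations are routine once (\ref{eq:6-3}) is in hand, so the only thing demanding care is bookkeeping: keeping the conjugations in $m(\lam)$ versus $m(\mu)$ straight and applying (\ref{eq:6-3}) with the correct pairing $\lam=T^{-1}(\alpha)$, $\mu=T^{-1}(\beta)$. I expect no genuine obstacle, because identity (\ref{eq:6-3}) already encodes precisely the factorization $K_\alpha^{\cP}(\beta)=\overline{m(\lam)}\,m(\mu)\,k_\lam^{\psi}(\mu)$ that characterizes a unitary weighted composition operator between the two spaces.
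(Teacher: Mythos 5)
Your proposal is correct and follows essentially the same route as the paper: both read identity (\ref{eq:6-3}) as the factorization $K_{T(\lam)}^{\cP}(T(\mu))=\overline{m(\lam)}\,m(\mu)\,k_{\lam}^{\psi}(\mu)$ with $m(z)=\frac{1-z}{1-\psi(z)}$, apply it twice to show $V$ sends kernels to scalar multiples of kernels while preserving inner products, and conclude by density. The only difference is that you spell out the extension-by-density, surjectivity, and the agreement of the abstract extension with the pointwise formula, which the paper leaves implicit in ``Thus, we have the conclusion.''
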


\begin{proof}
In this proof, we will write
\[
\frac{1-T^{-1}(w)}{1-\psi\circ T^{-1}(w)}f\circ T^{-1}(w)=
\frac{1-z}{1-\psi(z)}f(z)
\]
for short. 
It follows from (\ref{eq:6-3}) that 
\[
\frac{1-z}{1-\psi(z)}\cdot \dfrac{1-\overline{\psi(\lam)}\psi(z)}{1-\overline{\lam}z}
=\frac{1-\overline{\psi(\lam)}}{1-\overline{\lam}}
\cdot\frac{\varphi(w)-\overline{\varphi(\alpha)}}{w-\overline{\alpha}}
\]
belongs to $\cP(\varphi)$. 
Moreover, by (\ref{eq:6-3}) again, we have that
\begin{align*}
&\left\la V\dfrac{1-\overline{\psi(\lam)}\psi(z)}{1-\overline{\lam}z}, 
V\dfrac{1-\overline{\psi(\mu)}\psi(z)}{1-\overline{\mu}z} \right\ra_{\cP(\varphi)}\\
&\quad=\left\la \frac{1-z}{1-\psi(z)}\cdot \dfrac{1-\overline{\psi(\lam)}\psi(z)}{1-\overline{\lam}z}, 
\frac{1-z}{1-\psi(z)}\cdot\dfrac{1-\overline{\psi(\mu)}\psi(z)}{1-\overline{\mu}z} \right\ra_{\cP(\varphi)}\\
&\quad=\left\la\frac{1-\overline{\psi(\lam)}}{1-\overline{\lam}}
\cdot\frac{\varphi(w)-\overline{\varphi(\alpha)}}{w-\overline{\alpha}},
\frac{1-\overline{\psi(\mu)}}{1-\overline{\mu}}\cdot \frac{\varphi(w)-\overline{\varphi(\beta)}}{w-\overline{\beta}}\right\ra_{\cP(\varphi)}\\
&\quad= \frac{1-\overline{\psi(\lam)}}{1-\overline{\lam}}\cdot \frac{1-\psi(\mu)}{1-\mu}
\cdot\frac{\varphi(\beta)-\overline{\varphi(\alpha)}}{\beta-\overline{\alpha}}\\
&\quad=\frac{1-\overline{\psi(\lam)}\psi(\mu)}{1-\overline{\lam}\mu}\\
&\quad=\left\la \dfrac{1-\overline{\psi(\lam)}\psi(z)}{1-\overline{\lam}z}, 
\dfrac{1-\overline{\psi(\mu)}\psi(z)}{1-\overline{\mu}z} \right\ra_{\cH(\psi)}.
\end{align*}
Thus, we have the conclusion.
\end{proof}

As an application of Theorem \ref{lem:6-3}, 
nontrivial elements in $\cH(\psi)$ are obtained. 
For example, if $c\neq 0$ in Proposition \ref{thm:5-4}, 
then constant function $1$ belongs to $\cP(\varphi)$. 
Hence, by Theorem \ref{lem:6-3}, we have the following corollary.
\begin{cor}\label{cor:5-5}
If $c\neq 0$, then
\[
\frac{1-\psi(z)}{1-z}
\]
belongs to $\cH(\psi)$.
\end{cor}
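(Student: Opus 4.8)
The plan is to transplant the question into the Pick space $\cP(\varphi)$ by means of the isometry $V$ of Theorem \ref{lem:6-3}, and to exploit the fact that the hypothesis $c\neq 0$ forces the constant function $1$ to be an honest element of $\cP(\varphi)$. The preimage of $1$ under $V$ will then be exactly the function in the statement.

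First I would invoke Proposition \ref{thm:5-4}. Under the assumption $c\neq 0$ it gives the decomposition $\cP(\varphi)=\C+\M$, where $\C$ is the reproducing kernel Hilbert space of constant functions on $\bH$. In particular the constant function $1$ lies in $\cP(\varphi)$. Next, Theorem \ref{lem:6-3} supplies a surjective isometry $V$ from $\cH(\psi)$ onto $\cP(\varphi)$; since $V$ is onto and $1\in\cP(\varphi)$, there is a unique $g\in\cH(\psi)$ with $Vg=1$. The remaining task is purely one of identification: to recognize $g$ explicitly.

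For this I would unwind the definition of $V$ in the shorthand used in the proof of Theorem \ref{lem:6-3}, where $V$ acts simply as multiplication by $\frac{1-z}{1-\psi(z)}$ (after the Cayley substitution $w=T(z)$). The equation $Vg=1$ then reads $\frac{1-z}{1-\psi(z)}\,g(z)=1$, so that $g(z)=\frac{1-\psi(z)}{1-z}$. Since $g\in\cH(\psi)$ by construction, this is precisely the conclusion of the corollary. The only point demanding care is the bookkeeping in this last step — one must be sure that inverting the multiplication operator $V$ yields exactly $\frac{1-\psi(z)}{1-z}$, with no stray factor from the conjugation by $T^{-1}$ — but because $V$ is literally multiplication by $\frac{1-z}{1-\psi(z)}$ the inversion is immediate, and there is no substantive obstacle; the real content is carried entirely by Proposition \ref{thm:5-4} together with the surjectivity of $V$.
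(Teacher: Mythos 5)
Your proof is correct and follows essentially the same route as the paper: the paper likewise deduces from Proposition \ref{thm:5-4} that the constant function $1$ lies in $\cP(\varphi)$ when $c\neq 0$, and then pulls it back through the surjective isometry $V$ of Theorem \ref{lem:6-3}, whose inversion is immediate since $V$ acts as multiplication by $\frac{1-z}{1-\psi(z)}$ after the Cayley substitution. There is no gap; your explicit identification of $V^{-1}1=\frac{1-\psi(z)}{1-z}$ is exactly what the paper leaves implicit.
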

In the next section, we will see that 
further elements in $\cH(\psi)$ will be obtained from analysis of $\cP(\varphi)$. 

\section{A chordal L\"{o}wner expansion}\label{sec:2-1-6}

We shall begin with the following fundamental example in the chordal L\"{o}wner theory. 
\begin{ex}\label{ex:6-1} \rm
We set 
\[
B_{rs}(z)=\sqrt{z^2-2(s-r)}\quad (0\leq r \leq s).
\] 
Then, $B_{rs}$ is a univalent (i.e. holomorphic and injective) map from $\bH$ onto $\bH \setminus \{iy: 0<y \leq \sqrt{2(r-s)}\}$. 
Moreover,  
$B_{rs}$ is the solution of  
\begin{equation}\label{eq:6-1}
\frac{\p B_{rs}}{\p s}(z)=-\frac{1}{B_{rs}(z)}
\end{equation}
with initial condition $B_{ss}(z)=z$. 
The differential equation (\ref{eq:6-1}) is the basic case of the chordal L\"{o}wner equation
\begin{equation}\label{eq:6-2-2}
\frac{\p B_{rs}}{\p s}(z)=\int_{\R}\frac{1}{\xi-B_{rs}(z)}\ d\nu_s(\xi)=\int_{\R}\varphi_{\xi}(B_{rs}(z))\ d\nu_s(\xi),
\end{equation}
where $\nu_s$ is a Borel measure on $\R$ and $\varphi_{\xi}$ is the function in Example \ref{ex:5-1}.
Indeed, choosing $\nu_s$ as the Dirac measure centered on $\xi=0$ for all $s\geq 0$ in (\ref{eq:6-2-2}), 
we have (\ref{eq:6-1}). 
\end{ex}
In this paper, we need only the above example.  
For further details of the general chordal L\"{o}wner equation, see Schlei{\ss}inger~\cite{Sch}. 
Let $B_t(z)=B_{at}(z)$ $(t\geq a\geq 0)$ be the family of functions in Example \ref{ex:6-1}.
Then, applying Theorem \ref{thm:2-2}, we consider the integral operator defined as follows: 
\[
\bI_{1/B}: 
f(t) \mapsto F(z)=\int_a^b \frac{1}{B_t(z)}f(t)\ dt \quad (f\in L^2([a,b])).
\]

\begin{lem}\label{lem:6-2}
Let $B_t(z)=B_{at}(z)$ $(t\geq a\geq 0)$ be the family of functions in Example \ref{ex:6-1}.
Then, for any $\alpha$ in $\bH$, 
\[
\frac{\p}{\p t}
\frac{B_t(z)-\overline{B_t(\alpha)}}{z-\overline{\alpha}}
=\frac{1}{\overline{B_t(\alpha)}B_t(z)}\cdot \frac{B_t(z)-\overline{B_t(\alpha)}}{z-\overline{\alpha}}.
\]
\end{lem}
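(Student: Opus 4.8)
The plan is to compute the $t$-derivative directly, using the chordal Löwner equation (\ref{eq:6-1}) to handle the two places where $t$ enters, namely $B_t(z)$ and $\overline{B_t(\alpha)}$. The quotient $\frac{B_t(z)-\overline{B_t(\alpha)}}{z-\overline{\alpha}}$ has a fixed denominator $z-\overline{\alpha}$ (independent of $t$), so the derivative only acts on the numerator. This is the analogue of Lemma \ref{lem:2-1} for the chordal setting, and I expect the same bookkeeping to work.

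First I would differentiate the numerator:
\[
\frac{\p}{\p t}\left(B_t(z)-\overline{B_t(\alpha)}\right)
=\frac{\p B_t}{\p t}(z)-\overline{\frac{\p B_t}{\p t}(\alpha)}.
\]
By (\ref{eq:6-1}) we have $\frac{\p B_t}{\p t}(z)=-1/B_t(z)$, and taking the complex conjugate of the same equation at the point $\alpha$ gives $\overline{\frac{\p B_t}{\p t}(\alpha)}=-1/\overline{B_t(\alpha)}$. Substituting,
\[
\frac{\p}{\p t}\left(B_t(z)-\overline{B_t(\alpha)}\right)
=-\frac{1}{B_t(z)}+\frac{1}{\overline{B_t(\alpha)}}
=\frac{B_t(z)-\overline{B_t(\alpha)}}{\overline{B_t(\alpha)}B_t(z)}.
\]

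Dividing through by $z-\overline{\alpha}$ then yields exactly
\[
\frac{\p}{\p t}\frac{B_t(z)-\overline{B_t(\alpha)}}{z-\overline{\alpha}}
=\frac{1}{\overline{B_t(\alpha)}B_t(z)}\cdot\frac{B_t(z)-\overline{B_t(\alpha)}}{z-\overline{\alpha}},
\]
which is the claimed identity. The only point requiring a word of justification is that differentiation in $t$ commutes with the conjugation in $\overline{B_t(\alpha)}$; this is routine since $t$ is real and the chordal equation (\ref{eq:6-1}) applies verbatim at $\alpha\in\bH$. I anticipate no genuine obstacle here: the statement is a one-line consequence of the differential equation, structurally identical to Lemma \ref{lem:2-1}, and the main content is simply recognizing that the combination $-1/B_t(z)+1/\overline{B_t(\alpha)}$ factors as $(B_t(z)-\overline{B_t(\alpha)})/(\overline{B_t(\alpha)}B_t(z))$.
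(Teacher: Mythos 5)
Your proof is correct and follows exactly the route the paper intends: the paper's own proof is the one-line remark ``By the chordal L\"{o}wner equation (\ref{eq:6-1}), we have the conclusion,'' and your computation---differentiating the numerator, applying (\ref{eq:6-1}) at $z$ and its conjugate at $\alpha$, and factoring $-1/B_t(z)+1/\overline{B_t(\alpha)}$---is precisely the bookkeeping that remark leaves implicit. Nothing further is needed.
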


\begin{proof}
By the chordal L\"{o}wner equation (\ref{eq:6-1}), we have the conclusion. 
\end{proof}

\begin{thm}\label{thm:6-3}
Let $B_t(z)=B_{at}(z)$ $(t\geq a\geq 0)$ be the family of functions in Example \ref{ex:6-1}.
Then, 
\[
\cP(B_b)=\exp \M(\bI_{1/B}; L^2([a,b])).
\]
\end{thm}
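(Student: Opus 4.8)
The plan is to prove the equality by showing that $\cP(B_b)$ and $\exp\M(\bI_{1/B})$ have the same reproducing kernel; since a reproducing kernel Hilbert space is uniquely determined by its kernel (Moore--Aronszajn), this forces the two spaces to coincide. First I would fix the meaning of the symbol $\exp\M$ at the level of kernels: if $\M$ is a reproducing kernel Hilbert space with kernel $K_{\M}(z,\alpha)$, then $\exp\M$ is the reproducing kernel Hilbert space whose kernel is the pointwise exponential $e^{K_{\M}(z,\alpha)}=\sum_{n=0}^{\infty}K_{\M}(z,\alpha)^{n}/n!$. This is a genuine kernel because each power $K_{\M}^{n}$ is positive semidefinite by the Schur product theorem, and positive semidefiniteness is preserved under nonnegative linear combinations and pointwise limits.

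Next I would compute the reproducing kernel of $\M(\bI_{1/B})$. In the notation of Section~\ref{sec:1} the operator $\bI_{1/B}$ corresponds to the choices $k(t,z,w)=1$, $X=[a,b]$, $d\sigma=dt$, and $B(t,z)=1/B_t(z)$, so that $L^2(\cH(t))=L^2([a,b])$ exactly as in Example~\ref{ex:2-5}. Applying Theorem~\ref{thm:2-2} with these choices gives
\[
K_{\M}(z,\alpha)=\int_a^b \frac{1}{\overline{B_t(\alpha)}B_t(z)}\ dt.
\]
Along the way I would verify Assumption (E) for $1/B_t$, using that each $B_t$ maps $\bH$ into $\bH$ and hence never vanishes there, so $1/B_t$ is bounded on compact subsets of $\bH$ uniformly in $t\in[a,b]$.

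The heart of the argument is to identify the kernel of $\cP(B_b)$, namely $G_b(z,\alpha)=\dfrac{B_b(z)-\overline{B_b(\alpha)}}{z-\overline{\alpha}}$, with $e^{K_{\M}(z,\alpha)}$. Fixing $z$ and $\alpha$ in $\bH$, Lemma~\ref{lem:6-2} states that $G_t(z,\alpha)$ solves the first-order linear ordinary differential equation
\[
\frac{\p}{\p t}G_t(z,\alpha)=\frac{1}{\overline{B_t(\alpha)}B_t(z)}\,G_t(z,\alpha)
\]
in the parameter $t$. Since $B_a=B_{aa}$ is the identity map, the initial value is $G_a(z,\alpha)=\dfrac{z-\overline{\alpha}}{z-\overline{\alpha}}=1$. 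Solving this ODE directly, rather than integrating a logarithm (which avoids any branch-cut bookkeeping), yields
\[
G_t(z,\alpha)=\exp\left(\int_a^t \frac{1}{\overline{B_s(\alpha)}B_s(z)}\ ds\right),
\]
and evaluating at $t=b$ gives $G_b(z,\alpha)=\exp\big(K_{\M}(z,\alpha)\big)$.

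Combining these three steps, the reproducing kernel of $\cP(B_b)$ is the exponential of the reproducing kernel of $\M(\bI_{1/B})$, which is precisely the kernel of $\exp\M(\bI_{1/B})$; hence the two spaces coincide. The main obstacle I expect is not the ODE identification, which is short and is forced by Lemma~\ref{lem:6-2} together with the initial condition $B_a=\mathrm{id}$, but rather making the definition of $\exp\M$ precise and checking the technical hypotheses needed to invoke Theorem~\ref{thm:2-2}, especially Assumption~(E) and the relevant integrability for $1/B_t$ near the boundary of $\bH$.
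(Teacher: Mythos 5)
Your proposal is correct and follows essentially the same route as the paper: both identify the reproducing kernel of $\exp\M(\bI_{1/B})$ as the pointwise exponential of the kernel $\int_a^b \frac{1}{\overline{B_t(\alpha)}B_t(z)}\,dt$ furnished by Theorem \ref{thm:2-2}, and then use Lemma \ref{lem:6-2} together with the initial condition $B_a=\mathrm{id}$ to show this exponential equals the kernel $\frac{B_b(z)-\overline{B_b(\alpha)}}{z-\overline{\alpha}}$ of $\cP(B_b)$. The only differences are cosmetic: you solve the linear ODE by uniqueness where the paper integrates $\frac{\p}{\p t}\log$, and you spell out the Schur-product justification of $\exp$ of a kernel and Assumption (E), which the paper handles by citation and by taking for granted, respectively.
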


\begin{proof}
We set $h_{\alpha}(t)=1/\overline{B_t(\alpha)}$ for any $\alpha$ in $\bH$. 
Then, $h_{\alpha}$ belongs to $L^2([a,b])$ and
\[
\exp ((\bI_{1/B} h_{\alpha})(z))=\exp\left(\int_a^b \frac{1}{\overline{B_t(\alpha)}B_t(z)}\ dt\right)
\]
is the kernel function generating $\exp \M(\bI_{1/B})$ 
(see Exercise (k) in p. 320 of Nikolski~\cite{Nik} or Chapter 7 in Paulsen--Raghupathi~\cite{PR}). 
Moreover, by Lemma \ref{lem:6-2}, we have that  
\begin{align*}
\exp ((\bI_{1/B} h_{\alpha})(z))
&=\exp \left(\int_a^b \frac{1}{\overline{B_t(\alpha)}B_t(z)}\ dt\right)\\
&=\exp \left(\int_a^b\frac{w-\overline{\alpha}}{B_t(w)-\overline{B_t(\alpha)}}
\cdot \dfrac{\p}{\p t}\dfrac{B_t(w)-\overline{B_t(\alpha)}}{w-\overline{\alpha}}\ dt\right)
\\
&=\exp \left(\int_a^b \dfrac{\p}{\p t}\log \left(\dfrac{B_t(w)-\overline{B_t(\alpha)}}{w-\overline{\alpha}}\right)\ dt\right)\\
&=\frac{B_b(w)-\overline{B_b(\alpha)}}{w-\overline{\alpha}}.
\end{align*} 
This concludes the proof.
\end{proof}

\begin{cor}\label{cor:6-4}
Let $B_t(z)=B_{at}(z)$ $(t\geq a\geq 0)$ be the family of functions in Example \ref{ex:6-1}.
Then, 
\[
\exp(z-B_t(z))
\]
belongs to $\cP(B_t)$. 
\end{cor}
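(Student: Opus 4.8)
The plan is to invoke Theorem \ref{thm:6-3}, which identifies $\cP(B_b)$ as $\exp \M(\bI_{1/B}; L^2([a,b]))$, and to exhibit a concrete element of $\M(\bI_{1/B})$ whose exponential is the claimed function. By definition of the integral operator $\bI_{1/B}$, a generic element of $\M(\bI_{1/B})$ has the form $\int_a^b \frac{1}{B_t(z)} f(t)\, dt$ for some $f$ in $L^2([a,b])$. So the natural first step is to try the simplest possible choice, $f \equiv 1$, and compute
\[
F(z) = \int_a^b \frac{1}{B_t(z)}\, dt.
\]
The key observation is that the chordal L\"owner equation (\ref{eq:6-1}) reads $\frac{\p B_t}{\p t}(z) = -\frac{1}{B_t(z)}$, so the integrand is precisely $-\frac{\p B_t}{\p t}(z)$. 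Hence $F(z) = -\int_a^b \frac{\p B_t}{\p t}(z)\, dt = B_a(z) - B_b(z) = z - B_b(z)$, using the initial condition $B_a(z) = B_{aa}(z) = z$.

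This immediately gives that $z - B_b(z)$ belongs to $\M(\bI_{1/B})$, and therefore $\exp(z - B_b(z))$ belongs to $\exp \M(\bI_{1/B}) = \cP(B_b)$ by Theorem \ref{thm:6-3}. Since $b \ge a$ is arbitrary, renaming $b$ as $t$ yields exactly the statement that $\exp(z - B_t(z))$ lies in $\cP(B_t)$. The structure of the argument is thus completely parallel to the proof of Corollary \ref{cor:3-7}: produce the element as an explicit integral, recognize the integrand as a total $t$-derivative via the governing L\"owner equation, and telescope.

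There is, however, one point that needs care, which I expect to be the only genuine obstacle: verifying that the constant function $f \equiv 1$ really lies in $L^2([a,b])$ so that $F$ is a legitimate element of $\M(\bI_{1/B})$. On a bounded interval $[a,b]$ this is trivial since $\int_a^b 1\, dt = b - a < \infty$, so in fact there is nothing to check here. More substantively, one should confirm that the exponential construction $\exp \M(\bI_{1/B})$ is the reproducing kernel Hilbert space whose kernel is the exponential of the kernel of $\M(\bI_{1/B})$ — but this is exactly what Theorem \ref{thm:6-3} asserts and has already been established (via the cited results on exponentials of kernels), so I may use it directly.

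In writing the proof I would simply mirror the display-chain in Corollary \ref{cor:3-7}, showing the equalities
\[
\int_a^b \frac{1}{B_t(z)}\, dt = -\int_a^b \frac{\p B_t}{\p t}(z)\, dt = z - B_b(z),
\]
then concluding by Theorem \ref{thm:6-3} that $\exp(z - B_b(z)) \in \cP(B_b)$ and relabeling $b$ as $t$. The entire argument is a short, clean application of the machinery already built, and no new estimates or convergence arguments beyond those already subsumed in Theorem \ref{thm:6-3} are required.
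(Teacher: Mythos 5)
Your proposal is correct and takes essentially the same route as the paper's own proof: invoke Theorem \ref{thm:6-3}, choose $f\equiv 1$ in $L^2([a,b])$, rewrite $\frac{1}{B_t(z)}=-\frac{\p B_t}{\p t}(z)$ via the chordal L\"owner equation (\ref{eq:6-1}), and telescope using $B_a(z)=z$ to get $\exp(z-B_b(z))\in\cP(B_b)$. Nothing essential is missing.
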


\begin{proof}
It follows from Theorem \ref{thm:6-3} that  
\[
F(z)=\exp \left(\int_a^b \frac{1}{B_t(z)}f(t)\ dt\right)
\]
belongs to $\cP(B_b)$ for any function $f$ in $L^2([a,b])$. 
Setting $f(t)=1$, by the chordal L\"{o}wner equation (\ref{eq:6-1}), we have that 
\begin{align*}
F(z)
&=\exp \left(\int_a^b \frac{1}{B_t(z)}\ dt\right)\\
&=\exp \left(-\int_a^b \frac{\p B_t}{\p t}(z)\ dt\right)\\
&=\exp(z-B_b(z))
\end{align*}
This concludes the proof. 
\end{proof}

\begin{rem}\rm 
In the same way as the proof of Theorem \ref{thm:6-3}, 
general chordal L\"{o}wner expansions analogous to Theorem \ref{thm:5-2}  are obtained from 
the chordal L\"{o}wner equation (\ref{eq:6-2-2}). 
\end{rem}

\begin{acknowledgment}\rm
The author would like to thank 
Professor Ikkei Hotta (Yamaguchi University) and Dr. Takuya Murayama (Kobe University). 
This research is greatly indebted to their unpublished surveys on the L\"{o}wner theory. 
This work was supported by JSPS KAKENHI Grant Number JP24K06771 and the Research Institute for Mathematical Sciences,
an International Joint Usage/Research Center located in Kyoto University.
\end{acknowledgment}

\end{document}